\documentclass[numbers,webpdf]{ima-authoring-template}%

\graphicspath{{Fig/}}

\theoremstyle{thmstyletwo}%
\newtheorem{theorem}{Theorem}
\newtheorem{proposition}[theorem]{Proposition}%
\newtheorem{lemma}[theorem]{Lemma}

\newtheorem{example}{Example}%
\newtheorem{remark}{Remark}%

\numberwithin{equation}{section}

\usepackage{subfig}
\usepackage{mathdots}
\newcommand{\norm}[1]{\left\|#1\right\|}
\newcommand{\inner}[1]{\left\langle #1\right\rangle}
\newcommand{\E}{\mathcal{E}}
\begin{document}

\DOI{DOI HERE}
\copyrightyear{2021}
\vol{00}
\pubyear{2021}
\access{Advance Access Publication Date: Day Month Year}
\appnotes{Paper}
\copyrightstatement{Published by Oxford University Press on behalf of the Institute of Mathematics and its Applications. All rights reserved.}
\firstpage{1}


\title[Algorithms for Monotone Inclusion Problem with Sums of H\"{o}lder Operators]{Convergence Rates of  Tseng’s Splitting Method and Its Acceleration Schemes for Monotone Inclusion Problem with a Sum of H\"{o}lder Continuous Operators}

\author{Yi Zhang*
\address{\orgdiv{Department of Applied Mathematics}, \orgname{The Hong Kong Polytechnic University}, \orgaddress{\street{Hung Hom}, \postcode{Kowloon}, \state{Hong Kong}, \country{China}}}}

\authormark{Yi Zhang}

\corresp[*]{Corresponding author: \href{email:yi8zhang@polyu.edu.hk}{yi8zhang@polyu.edu.hk}}

\received{20th}{6}{2026}
\revised{Date}{0}{Year}
\accepted{Date}{0}{Year}


\abstract{The monotone inclusion problem is fundamental in applied mathematics and is closely related to a wide range of practical applications. However, existing solution methods typically require the underlying operator to be Lipschitz continuous. Recently, H\"{o}lder continuity, a weaker condition than Lipschitz continuity, has proven useful in characterizing certain real-world problems. To bridge this gap, we investigate the convergence rates of the Tseng’s splitting method (a fundamental algorithm for monotone inclusion problem) and its two accelerated variants, the composite extra anchored gradient method and the symplectic composite extra gradient method, under the H\"{o}lder continuity assumption. Our numerical experiments demonstrate that the numerical performance of these algorithms aligns with their respective theoretical convergence rates.}
\keywords{monotone inclusion problem, H\"{o}lder continuous, Tseng's splitting method, convergence rate analysis.}


\maketitle

\section{Introduction}

Let $\mathcal{H}$ be a real Hilbert space equipped with the inner product $\inner{\cdot,\cdot}$ and the corresponding norm $\norm{\cdot}$. We denote  $2^\mathcal{H}$ as the collection of all subsets of $\mathcal{H}$. Throughout this paper, we consider the monotone inclusion problem
\begin{equation}
	0\in T(x):=F(x)+G(x),
	\label{eq:main}
\end{equation}
where $T:\mathcal{H}\to 2^{\mathcal{H}}$ is a monotone operator, $F:\mathcal{H}\to\mathcal{H}$ is a continuous operator, and $G:\mathcal{H}\to 2^{\mathcal{H}}$ is a maximally monotone operator. Moreover, $F=\sum_{i=1}^N F_i$, where each $F_i:\mathcal{H}\to\mathcal{H}$ is H\"{o}lder continuous with H\"{o}lder exponent $\alpha_i\in(0,1]$ and H\"{o}lder constant $L_i$, i.e.,
\begin{equation}
	\norm{F_i(x)-F_i(y)}\leq L_i\norm{x-y}^{\alpha_i},\quad\forall x, y\in\mathcal{H}.
	\label{eq:holder}
\end{equation} 
If $\alpha_i=1$, then \eqref{eq:holder} implies that $F_i$ is $L_i$-Lipschitz continuous. We denote the solution set by $\mathrm{SOL}:=\{x\in\mathcal{H}\mid 0\in T(x)\}$ and assume that $\mathrm{SOL}\neq\emptyset$.

The monotone inclusion problem \eqref{eq:main} has a strong connection with several important classes of applied mathematical problems, such as optimization problems, minimax problems, duality theory, and variational inequality (VI) problems. These problem classes play an important role in numerous practical applications; see, e.g., \cite{TOYASAKI2014340,bental09,kinderlehrer2000,UI2016139,taylor84}. Since many algorithms for solving \eqref{eq:main} are rooted in algorithms for solving VI problems, we discuss the relationship between the VI problem and the monotone inclusion problem \eqref{eq:main} in detail.
\begin{example}[VI Problem]
	The Stampacchia variational inequality problem consists in finding a point $x^*\in\mathcal{C}\subset\mathcal{H}$ such that
	\begin{equation}
		\inner{F(x^*),x-x^*}\geq 0,\quad\forall x\in\mathcal{C}.
		\label{eq:vi}
	\end{equation}
	If $\mathcal{C}$ is a closed convex set, \eqref{eq:vi} can be reformulated as
	\[
	0\in F(x^*)+N_\mathcal{C}(x^*),
	\]
	where $N_\mathcal{C}$ is the normal cone of $\mathcal{C}$ defined by
	\[
	N_\mathcal{C}(x^*):=\begin{cases}
		\{u\mid \inner{u,x-x^*}\leq 0,\ \forall x\in\mathcal{C}\},& \text{if } x^*\in\mathcal{C};\\
		\emptyset,&\text{if } x^*\notin\mathcal{C}.
	\end{cases}
	\]
\end{example}

Due to the wide range of applications of \eqref{eq:main} in practical problems, the study of algorithms for solving \eqref{eq:main} has attracted increasing interest. First, we give a concise overview of existing algorithms under the assumption that $F$ is Lipschitz continuous. The extra-gradient (EG) method, proposed independently in \cite{antipin1976,korpelevich76}, and Popov's method (also known as the OGDA method \cite{daskalakis2018b,daskalakis2018a}) , proposed in \cite{popov80}, are two classic algorithms for solving the VI problem \eqref{eq:vi} where $F$ is monotone on $\mathcal{C}$. The Tseng's splitting method \cite{tseng00} was developed to solve \eqref{eq:main} when $G$ is a maximally monotone operator. The mirror-prox method introduced in \cite{Nemirovski2004} studies the VI problem under the assumption that $F$ is monotone and Lipschitz continuous with respect to a non-Euclidean norm on $\mathcal{C}$. The EG+ method \cite{diakonikolas21} addresses the equation \(0 = F(x)\) under the weak Minty variational inequality (MVI) assumption. Further extensions include the EG method with line search \cite{cai2021}, which can solve the VI problem when $F$ is continuous and pseudomonotone with a convergence guarantee, and the adaptive EG+ method \cite{fan23,pethick22}, which incorporates adaptive step-sizes when $F$ satisfies the weak MVI assumption. As stochastic optimization continues to gain prominence, research on stochastic EG methods has also grown significantly; see, e.g., \cite{gorbunov22,iusem17,kannan19,mishchenko20}. For a comprehensive overview of the EG method and its variants, we refer the reader to the recent survey \cite{tran23}.

Since Nesterov's accelerated gradient method \cite{nesterov83} achieves a higher-order convergence rate than the vanilla gradient method while maintaining the same per-iteration computational complexity, the study of accelerated first-order methods for solving \eqref{eq:main} has also gained significant attention. Halpern's iteration, originally proposed for finding fixed points of nonexpansive operators (1-Lipschitz continuous operators) \cite{halpern67}, has been widely studied \cite{lieder2021,qi21} and has found applications as an acceleration technique \cite{chen2025,lu2024,sun2025a}. Consequently, several accelerated methods have been derived from Halpern's iteration. Yoon and Ryu \cite{yoon21} combined Halpern's iteration with the EG method to propose the extra anchored gradient (EAG) method for solving $0=F(x)$ where $F$ is monotone. In \cite{lee21}, Lee and Kim further combined the EAG method with the EG+ method to obtain the fast EG (FEG) method, which solves $0=F(x)$ under a non-monotonicity assumption. Yang et al. \cite{yang2024} later extended the EAG and FEG methods to solve \eqref{eq:main} without assuming $G=0$. Other acceleration schemes have also been developed. For instance, the symplectic EG method \cite{yuan2025}, derived from the symplectic acceleration technique \cite{yuan2026}, and the moving anchored EG method improve upon Halpern's iteration to achieve faster convergence rates, with their extensions presented in \cite{trandinh2025}. Tran et al. \cite{tran2024} used Nesterov's accelerated gradient method to derive an accelerated forward-backward-forward splitting method. In \cite{bot2023fast,bot2023project}, Bo\c{t} et al. proposed accelerated variants of OGDA, called fOGDA-VI, by discretizing an ODE. Qu et al. \cite{qu2026extra} combined the EG method with Anderson(1) acceleration \cite{anderson1965} to obtain the EG-Anderson(1) method.

Several works have studied algorithms for solving \eqref{eq:main} in the case where $F$ is H\"{o}lder continuous. Nemirovski \cite{Nemirovski2004} originally proposed the mirror-prox method for the VI problem when $F$ is monotone on a compact convex set $\mathcal{C}$, and showed that the convergence rate of the Minty gap function is $O\bigl(k^{-\frac{1+\alpha}{2}}\bigr)$. Dang and Lan \cite{dang2015} further extended the mirror-prox method to solve the VI problem when $F$ is pseudomonotone and $\mathcal{C}$ is closed and compact, establishing that
\[
\min_{0\leq k\leq K}\norm{\frac{1}{s_{k+1}}\bigl(x_k-P_\mathcal{C}(x_k-s_{k+1}F(x_k))\bigr)}^2 \leq O\left(\frac{1}{\alpha^{\frac{\alpha+2}{\alpha}}K^\alpha}\right).
\]
Stonyakin et al. \cite{stonyakin2022} studied a generalization of the mirror-prox method when the H\"{o}lder exponent or H\"{o}lder constant is unknown, as well as an inexact oracle variant of the method. \cite{chakraborty2025} combined the Popov method with the mirror-prox method to obtain the Popov mirror-prox method, and studied its convergence behavior under both deterministic and stochastic settings.
\subsection{Our Contributions}

To the best of our knowledge, there is no work that studies algorithms for solving \eqref{eq:main} where $G$ is a maximally monotone operator or $F$ is a sum of H\"{o}lder continuous operators. The reason why we study the case that $F$ is the sum of H\"{o}lder continuous operator is that the sum of H\"{o}lder continuous operator is not a H\"{o}lder continuous operator, as illustrated by the following example.

\begin{example} \cite[Example 1]{chen2025}
	\label{exp:univ}
	Consider $F:\mathbb{R}\to\mathbb{R}$ defined by
	\[
	F = F_1 + F_2,\quad F_1(x)=x,\quad F_2(x)=\operatorname{sign}(x)\,x^{\frac{1}{2}}.
	\]
	In \cite{chen2025}, Chen et al. show that $F$ is locally, but not globally, Hölder continuous.
	
	This simple example demonstrates that in problem \eqref{eq:main}, the operator $F$, expressed as a sum of operators $F_n$ each of which is Hölder continuous, may itself fail to be Hölder continuous.
\end{example} 

In this paper, we aim to develop algorithms for solving \eqref{eq:main} with convergence rate guarantees. We first study the convergence rate of the Tseng's splitting method introduced by Tseng \cite{tseng00} under that $F=\sum_{i=1}^{N}F_i$ and $F_i$ satisfies \eqref{eq:holder}. We prove that if
\[
s_{k+1}=\frac{ds}{(k+1)^{\frac{1-\alpha}{2}}},
\]
where $\alpha=\min_{1\leq i\leq N}\alpha_i$, $d\in(0,1)$, and $0<s\leq\min_{1\leq i\leq N}\frac{1}{NL_i\sqrt{\alpha_i}}$, then the convergence rate of Tseng's splitting method satisfies
\[
\min_{0\leq k\leq K}\mathrm{dist}\bigl(0,T(y_{k+1})\bigr)^2 \leq O\!\left(\!\left(\frac{(2+2d^2)\alpha}{1-d^2}+\sum_{i=1}^N\frac{(1-\alpha_i)\alpha\ln(K)}{N(1-d^2)\alpha_i}\right)K^{-\alpha}\right).
\]

Next, we extend the convergence rates of two accelerated Tseng's splitting methods: the composite FEG (com-FEG) method \cite{yang2024} and the symplectic composite extra-gradient (SCEG) method \cite{yuan2025}, under the assumption that $F=\sum_{i=1}^NF_i$ and $F_i$ satisfies \eqref{eq:holder} for all $i=1, \cdots, N$. We prove that if $\alpha_i\in(\frac{1}{3},1]$ for all $i=1,\dots,N$, and
\[
s_{k+1}=\frac{s}{(k+1)^{\frac{3(1-\alpha)}{2}}},
\]
where $\alpha=\min_{1\leq i\leq N}\alpha_i$ and $0<s\leq\min_{1\leq i\leq N}\frac{1}{(N^2L_i^2\alpha_i)^{\frac{1}{2\alpha_i}}}$, then the convergence rate of the proposed algorithm is
\[
\mathrm{dist}\bigl(0,T(x_K)\bigr)^2 \leq O\!\left(\!\left(\frac{1}{(3\alpha-1)^2s^2}+\sum_{i=1}^N\frac{(1-\alpha_i)\ln(K)}{N\alpha_i}\right)K^{-(3\alpha-1)}\right).
\]
\begin{remark}
	If $\alpha>\frac{1}{2}$, the convergence rate of the com-FEG method and the SCEG method is faster than the convergence rate of the Tseng's splitting method. If $\alpha<\frac{1}{2}$, the convergence rate of the Tseng's splitting method is faster than the com-FEG method and the SCEG method. Our numerical experiments also support this theoretical result.
\end{remark}
\subsection{Preliminaries}
A set-valued operator $T:\mathcal{H}\to 2^{\mathcal{H}}$ is said to be monotone if\[
\inner{u-v,x-y}\geq 0,\quad\forall x, y\in\mathcal{H}, u\in T(x), v\in T(y).
\]
An operator $G:\mathcal{H}\to 2^{\mathcal{H}}$ is a maximally monotone operator if $G$ is a monotone operator and the graph of $G$, defined as $\mathrm{graph}(G):=\{(x,u)|u\in G(x)\}$, is not a proper subset of the graph of another monotone operator.

The proximal point operator of a maximally monotone operator $G$ is defined as $J_{sG}:=(I+sG)^{-1}$, where $I:\mathcal{H}\to\mathcal{H}$ is the identity map and $s>0$. By invoking the monotonicity of $G$ and the Minty’s surjectivity theorem \cite{minty62}, $J_{sG}$ is a single-valued mapping defined on $\mathcal{H}$. 

Given a subset $\mathcal{S}$ and a point $x$, the distance between $x$ and $\mathcal{S}$ is defined as \[
\mathrm{dist}(x,\mathcal{S})=\inf_{y\in\mathcal{S}}\norm{x-y}.
\]
\section{Tseng's Splitting Method}
First, we study the convergence rate of the Tseng's splitting method under the assumption that $F=\sum_{i=1}^NF_i$, where $F_i$ satisfies \eqref{eq:holder}. The Tseng's spliiting method is a first-order method to solve \eqref{eq:main}, whose iteration formula is given by:	start with $x_0$,
\begin{align}
	y_{k+1} &= J_{s_{k+1}G}(x_k-s_{k+1}F(x_k)), \label{eq:tseng1}\\
	x_{k+1} &= y_{k+1}-s_{k+1}(F(y_{k+1})-F(x_k)),\label{eq:tseng2}
\end{align}
where $s_{k+1}>0$. 
\begin{remark}
	It has been proven that if $F$ is $L$-Lipschitz continuous and $s_{k+1}\equiv s\in(0,\frac{1}{L})$, the convergence rate of the Tseng's splitting method is\[
	\min_{0\leq k\leq K}\mathrm{dist}(0,T(y_{k+1}))^2\leq O(K^{-1}).
	\]
\end{remark}
\begin{theorem}
	Let $\{x_k\}$ and $\{y_{k+1}\}$ be the sequences generated by \eqref{eq:tseng1} and \eqref{eq:tseng2}. If $F=\sum_{i=1}^NF_i$, $F_i$ satisfies \eqref{eq:holder} for all $i=1, \cdots, N$, $G$ is a maximally monotone operator, $T$ is monotone and 
\begin{equation}
		s_{k+1}=\frac{ds}{(k+1)^{\frac{1-\alpha}{2}}},
		\label{eq:stepsizeT}
\end{equation}
	where $\alpha=\min_{1\leq i\leq N}\alpha_i$, $d\in(0,1)$, $0<s\leq\min_{1\leq i\leq N}\left(\frac{1}{NL_i\sqrt{\alpha_i}}\right)$, then we have
	\[
	\min_{0\leq k\leq K}\mathrm{dist}(0,T(y_{k+1}))^2\leq O\left(\left(\frac{(2+2d^2)\alpha}{1-d^2}+\sum_{i=1}^N\frac{(1-\alpha_i)\alpha d^2\ln(K)}{N(1-d^2)\alpha_i}\right)K^{-\alpha}\right).
	\]
\end{theorem}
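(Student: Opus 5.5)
The plan is to follow the classical Lyapunov analysis of Tseng's method with $\norm{x_k-x^*}^2$ for a fixed $x^*\in\mathrm{SOL}$. The H\"{o}lder error terms are controlled through Young's inequality, which turns each $\norm{\cdot}^{2\alpha_i}$ into a quadratic piece plus a constant remainder. The step-size schedule \eqref{eq:stepsizeT} is designed so that these remainders become summable up to a logarithm.

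\textbf{Step 1 (one-step descent).} From \eqref{eq:tseng1}, $u_{k+1}:=\frac{1}{s_{k+1}}(x_k-y_{k+1})-F(x_k)\in G(y_{k+1})$. Hence
\[
v_{k+1}:=F(y_{k+1})+u_{k+1}=\frac{1}{s_{k+1}}(x_k-x_{k+1})\in T(y_{k+1}).
\]
Using $x_{k+1}=x_k-s_{k+1}v_{k+1}$ and monotonicity of $T$ with $0\in T(x^*)$, I expand to get
\[
\norm{x_{k+1}-x^*}^2\le \norm{x_k-x^*}^2-\norm{x_k-y_{k+1}}^2+s_{k+1}^2\norm{F(y_{k+1})-F(x_k)}^2 .
\]
This is the standard Tseng identity.

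\textbf{Step 2 (H\"{o}lder error).} Write $r=\norm{y_{k+1}-x_k}$. Then
\[
\norm{F(y_{k+1})-F(x_k)}^2\le N\sum_i L_i^2 r^{2\alpha_i}.
\]
For each $i$, I apply weighted Young's inequality with exponents $1/\alpha_i$ and $1/(1-\alpha_i)$:
\[
N s_{k+1}^2 L_i^2 r^{2\alpha_i}\le \frac{1}{N}\Bigl(\alpha_i\varepsilon_i r^2+(1-\alpha_i)\,c_{i,k}\Bigr),
\]
with $\varepsilon_i,c_{i,k}$ chosen so that the quadratic coefficient totals at most $d^2$ after summing over $i$. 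The choice $s\le 1/(NL_i\sqrt{\alpha_i})$ and the factor $d$ in $s_{k+1}$ should make this work. The remainder then scales like $(N^2 s_{k+1}^2L_i^2)^{1/(1-\alpha_i)}$. With $s_{k+1}^2\sim (k+1)^{-(1-\alpha)}$ and $\alpha\le\alpha_i$, it is at most of order $(k+1)^{-(1-\alpha)/(1-\alpha_i)}\le (k+1)^{-1}$ after normalization. I expect the bookkeeping to give a remainder of the form $\frac{(1-\alpha_i)d^2}{N\alpha_i}\cdot\frac{1}{k+1}$ times a constant. This yields
\[
\norm{x_{k+1}-x^*}^2\le\norm{x_k-x^*}^2-(1-d^2)\norm{x_k-y_{k+1}}^2+\sum_i\frac{(1-\alpha_i)d^2}{N\alpha_i}\frac{C}{k+1}.
\]

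\textbf{Step 3 (residual bound and summation).} I bound
\[
\norm{v_{k+1}}\le \frac{1}{s_{k+1}}\bigl(\norm{x_k-y_{k+1}}+s_{k+1}\norm{F(y_{k+1})-F(x_k)}\bigr).
\]
Then $s_{k+1}^2\norm{v_{k+1}}^2\le 2\norm{x_k-y_{k+1}}^2+2s_{k+1}^2\norm{\Delta F}^2$. The second term is again handled by Step 2, giving $(2+2d^2)\norm{x_k-y_{k+1}}^2$ plus the same remainders; this is the source of the constant $2+2d^2$. Summing the descent inequality over $k=0,\dots,K$, the remainders add up to $O(\ln K)$. Therefore
\[
\sum_k s_{k+1}^2\,\mathrm{dist}(0,T(y_{k+1}))^2\le \frac{2+2d^2}{1-d^2}\Bigl(\norm{x_0-x^*}^2+\textstyle\sum_i\frac{(1-\alpha_i)d^2\ln K}{N\alpha_i}\Bigr).
\]
Finally I use $\min\le$ weighted average together with $\sum_{k\le K}s_{k+1}^2=d^2s^2\sum(k+1)^{-(1-\alpha)}\ge c\,K^{\alpha}/\alpha$. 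This produces the factor $\alpha K^{-\alpha}$ in the stated bound.

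\textbf{Main obstacle.} The delicate part is Step 2. The Young weights must be chosen so that the quadratic coefficient is exactly at most $d^2$, using the precise constraint $s\le\min_i 1/(NL_i\sqrt{\alpha_i})$. At the same time, the remainder must decay at least like $1/(k+1)$ for every $i$, which relies on the exponent $(1-\alpha)/2$ and on $\alpha=\min_i\alpha_i$. My first attempt is $\varepsilon_i=d^2$ and $c_{i,k}\propto s_{k+1}^{2/(1-\alpha_i)}$. I would then verify that $\frac{1-\alpha}{1-\alpha_i}\ge 1$, which holds since $\alpha\le\alpha_i$; the degenerate case $\alpha_i=1$ has no remainder.
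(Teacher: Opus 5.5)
Your proposal is correct and follows essentially the same route as the paper: the same Tseng descent inequality for $\norm{x_k-x^*}^2$ via monotonicity of $T$, the same $(2+2d^2)$ bound on $s_{k+1}^2\norm{F(y_{k+1})+u_{k+1}}^2$, and summation weighted by $\sum_k s_{k+1}^2$, which is of order $K^\alpha/\alpha$. Your weighted Young step is the paper's explicit computation of $\sup_{\lambda\geq 0}\bigl(N^2L_i^2s_{k+1}^2\lambda^{\alpha_i}-d^2\lambda\bigr)$. Taking $\varepsilon_i=d^2/\alpha_i$ reproduces the paper's remainder $\frac{(1-\alpha_i)d^2}{N\alpha_i(k+1)}$ with $C=1$. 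Your choice $\varepsilon_i=d^2$ also works and gives $C=\alpha_i^{-\alpha_i/(1-\alpha_i)}\leq e$.
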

\begin{proof}
	We begin by setting $\tilde{g}_{k+1}=s_{k+1}^{-1}(x_k-y_{k+1})-F(x_k)$. 
	From the definition of $J_{s_{k+1}G}$, it immediately follows that $\tilde{g}_{k+1}\in G(y_{k+1})$. 
	For an arbitrary point $x^*\in \mathrm{SOL}$, the following chain of equalities holds:
	\begin{align*}
		&\ \frac{1}{2}\norm{x_{k+1}-x^*}^2\\
		=&\ \frac{1}{2}\norm{y_{k+1}-x^*-s_{k+1}(F(y_{k+1})-F(x_k))}^2\\
		=&\ \frac{1}{2}\norm{y_{k+1}-x^*}^2-s_{k+1}\inner{F(y_{k+1})-F(x_k),y_{k+1}-x^*}+\frac{s_{k+1}^2}{2}\norm{F(y_{k+1})-F(x_k)}^2\\
		=&\ \frac{1}{2}\norm{x_k-x^*}^2+\frac{1}{2}\norm{y_{k+1}-x_k}^2+\inner{y_{k+1}-x_k,x_k-x^*}\\
		&\ -s_{k+1}\inner{F(y_{k+1})-F(x_k),y_{k+1}-x^*}+\frac{s_{k+1}^2}{2}\norm{F(y_{k+1})-F(x_k)}^2\\
		=&\ \frac{1}{2}\norm{x_k-x^*}-\frac{1}{2}\norm{y_{k+1}-x_k}^2+\frac{s_{k+1}^2}{2}\norm{F(y_{k+1})-F(x_k)}^2-s_{k+1}\inner{F(y_{k+1})+\tilde{g}_{k+1},y_{k+1}-x^*}.
	\end{align*}
	Applying the assumption on $F$ yields
	\begin{equation}
		\begin{split}
			&\ \frac{s_{k+1}^2}{2}\norm{F(y_{k+1})-F(x_k)}^2\\
			=&\ \frac{s_{k+1}^2}{2}\norm{\sum_{i=1}^N\left(F_i(y_{k+1})-F_i(x_k)\right)}^2\\
			\leq&\ \frac{Ns_{k+1}^2}{2}\sum_{i=1}^N\norm{F_i(y_{k+1})-F_i(x_k)}^2\\
			\leq&\ \frac{Ns_{k+1}^2}{2}\sum_{i=1}^NL_i^2\norm{y_{k+1}-x_k}^{2\alpha_i}\\
			=&\ \sum_{i=1}^N\left(\frac{NL_i^2s_{k+1}^2}{2}\norm{y_{k+1}-x_k}^{2\alpha_i}-\frac{d^2}{2N}\norm{y_{k+1}-x_k}^{2}\right)+\frac{d^2}{2}\norm{y_{k+1}-x_k}^2\\
			\leq&\ \sum_{i=1}^N\sup_{\lambda\geq 0}\frac{N^2L_i^2s_{k+1}^2\lambda^{\alpha_i}-d^2\lambda}{2N}+\frac{d^2}{2}\norm{y_{k+1}-x_k}^2.
		\end{split}
		\label{eq:Fdifferenceupper}
	\end{equation}
	In the case where $\alpha_i=1$, we have
	\[
	N^2L_i^2s_{k+1}^2=N^2L_i^2d^2s^2(k+1)^{-(1-\alpha)}\leq \frac{N^2L_i^2d^2}{N^2L_i^2}\leq d^2,
	\]
	which implies that the supremum vanishes, i.e.,
	\[
	\sup_{\lambda\geq 0}\frac{N^2L_i^2s_{k+1}^2\lambda-d^2\lambda}{2N}=0.
	\]
	If instead $\alpha_i<1$, then a direct maximization gives
	\[
	\sup_{\lambda\geq 0}\frac{N^2L_i^2s_{k+1}^2\lambda^{\alpha_i}-d^2\lambda}{2N}=\frac{(1-\alpha_i)d^2}{2N\alpha_i}\left(\frac{N^2L_i^2s_{k+1}^2\alpha_i}{d^2}\right)^{\frac{1}{1-\alpha_i}}.
	\]
	Recalling the definition of $s_{k+1}$, we obtain
	\begin{align*}
		&\ \frac{(1-\alpha_i)d^2}{2N\alpha_i}\left(\frac{N^2L_i^2s_{k+1}^2\alpha_i}{d^2}\right)^{\frac{1}{1-\alpha_i}}\\
		=&\ \frac{(1-\alpha_i)d^2(N^2L_i^2s^2\alpha_i)^{\frac{1}{1-\alpha_i}}}{2N\alpha_i}\cdot(k+1)^{-\frac{1-\alpha}{1-\alpha_i}}\\
		\leq&\ \frac{(1-\alpha_i)d^2}{2N\alpha_i(k+1)},
	\end{align*}
	where the last inequality follows from the fact that $1-\alpha_i\leq 1-\alpha$. 
	Observe that when $\alpha_i=1$, the right-hand side of the above estimate reduces to $0$. Hence,
	\[
	\sup_{\lambda\geq 0}\frac{N^2L_i^2s_{k+1}^2\lambda^{\alpha_i}-d^2\lambda}{2N}\leq\frac{(1-\alpha_i)d^2}{2N\alpha_i(k+1)}
	\]
	holds uniformly for all $\alpha_i\in(0,1]$.
	
	Furthermore, since $\tilde{g}_{k+1}\in G(y_{k+1})$ and $T$ is monotone, we have the inner product estimate $\inner{F(y_{k+1})+\tilde{g}_{k+1},y_{k+1}-x^*}\geq 0$. 
	Consequently, we derive
	\begin{align*}
		\frac{1}{2}\norm{x_{k+1}-x^*}^2\leq\frac{1}{2}\norm{x_k-x^*}^2-\frac{1-d^2}{2}\norm{y_{k+1}-x_k}^2+\sum_{i=1}^N\frac{(1-\alpha_i)d^2}{2N\alpha_i(k+1)}.
	\end{align*}
	Rearranging the above inequality gives the recursive bound
	\begin{align*}
		\norm{y_{k+1}-x_k}^2\leq\frac{1}{1-d^2}\left(\norm{x_k-x^*}^2-\norm{x_{k+1}-x^*}^2\right)+\sum_{i=1}^N\frac{(1-\alpha_i)d^2}{N(1-d^2)\alpha_i(k+1)}.
	\end{align*}
	
	Next, applying \eqref{eq:Fdifferenceupper} to estimate the term $s_{k+1}^2\norm{F(y_{k+1})+\tilde{g}_{k+1}}^2$, we get
	\begin{align*}
		&\ s_{k+1}^2\norm{F(y_{k+1})+\tilde{g}_{k+1}}^2\\
		\leq&\ 2s_{k+1}^2\norm{F(x_{k})+\tilde{g}_{k+1}}^2+2s_{k+1}^2\norm{F(y_{k+1})-F(x_k)}^2\\
		\leq&\ (2+2d^2)\norm{y_{k+1}-x_k}^2+\sum_{i=1}^N\frac{2(1-\alpha_i)d^2}{N\alpha_i(k+1)}.
	\end{align*}
	Thus, substituting the previous bound yields
	\begin{align*}
		&\ s_{k+1}^2\norm{F(y_{k+1})+\tilde{g}_{k+1}}^2\\
		\leq&\ \frac{2+2d^2}{1-d^2}\left(\norm{x_k-x^*}^2-\norm{x_{k+1}-x^*}^2\right)+\left(2+\frac{2+2d^2}{1-d^2}\right)\sum_{i=1}^N\frac{(1-\alpha_i)d^2}{N\alpha_i(k+1)}\\
		=&\ \frac{2+2d^2}{1-d^2}\left(\norm{x_k-x^*}^2-\norm{x_{k+1}-x^*}^2\right)+\frac{4}{1-d^2}\sum_{i=1}^N\frac{(1-\alpha_i)d^2}{N\alpha_i(k+1)}.
	\end{align*}
	
	Finally, summing the above inequality from $k=0$ to $K$ and invoking the relation $\mathrm{dist}(0,T(y_{k+1}))^2\leq\norm{F(y_{k+1})+\tilde{g}_{k+1}}^2$, we obtain
	\begin{align*}
		&\ \min_{0\leq k\leq K}\mathrm{dist}(0,T(y_{k+1}))^2\sum_{k=0}^K s_{k+1}^2\\
		\leq&\ \min_{0\leq k\leq K}\norm{F(y_{k+1})+\tilde{g}_{k+1}}^2\sum_{k=0}^K s_{k+1}^2\\
		\leq&\ \sum_{k=0}^K s_{k+1}^2\norm{F(y_{k+1})+\tilde{g}_{k+1}}^2\\
		\leq&\ \frac{2+2d^2}{1-d^2}\norm{x_0-x^*}^2+\frac{4}{1-d^2}\sum_{k=0}^K\sum_{i=1}^N\frac{(1-\alpha_i)d^2}{N\alpha_i(k+1)}.
	\end{align*}
	Since $\sum_{k=0}^K s_{k+1}^2$ is of the same order as $\frac{K^\alpha}{\alpha}$, and $\sum_{k=0}^K\frac{1}{k+1}$ is of the same order as $\ln(K)$, it follows that
	\[
	\min_{0\leq k\leq K}\mathrm{dist}(0,T(y_{k+1}))^2\leq O\left(\left(\frac{(2+2d^2)\alpha}{1-d^2}+\sum_{i=1}^N\frac{4(1-\alpha_i)\alpha d^2\ln(K)}{N(1-d^2)\alpha_i}\right)K^{-\alpha}\right).
	\]
\end{proof}
\section{Generalized Accelerated Tseng's Splitting Method}

In this section, we extend two accelerated extensions of the Tseng's splitting method to solve \eqref{eq:main} under the assumption that $F=\sum_{i=1}^NF_i$ and $F_i$ satisfies \eqref{eq:holder} for all $i=1, \cdots, N$, one is the com-FEG method \cite{yang2024}, another one is the SCEG method \cite{yuan2025}. 
\subsection{Generalized Fast Composite Extra-gradient Method}

This section is devoted to study the convergence rate of Algorithm \ref{al:ucfeg}. The main tool for studying Algorithm \ref{al:ucfeg} is the following Lyapunov function:\begin{equation}
	\E(k):=\Sigma_k^2\norm{\tilde{t}_k}^2+\Sigma_k\inner{\tilde{t}_k,x_k-x_0},
	\label{eq:lyapunovh}
\end{equation}
where $\tilde{t}_k=\begin{cases}
	0,& k=0;\\
	F(x_k)+\tilde{g}_k,& k\geq 1.
\end{cases}$ Because $\tilde{g}_{k+1}\in G(x_{k+1})$ for all $k\geq 0$, $\tilde{t}_{k+1}\in T(x_{k+1})$ for all $k\geq 0$.

\begin{algorithm}[!h]
	\caption{Generalized Composite Fast Extra-gradient Method, GCFEG}
	\label{al:ucfeg}
	\begin{algorithmic}[1]
		\State{\textbf{Initialization: } $x_0$, $z_0=x_0$, $\tilde{g}_0=0$, $\Sigma_0=0$, $0<s\leq\min_{1\leq i\leq N}\left(\frac{1}{(N^2L_i^2\alpha_i)^{\frac{1}{2\alpha_i}}}\right)$.}
		\For{$k=0,1,\dots$}
		\State{$s_{k+1}=\frac{s}{(k+1)^{\frac{3(1-\alpha)}{2}}}$.}
		\State{$\beta_{k+1}=\frac{s_{k+1}}{2\Sigma_k+s_{k+1}}$. }
		\State{$\tilde{x}_{k+1}=\beta_{k+1} x_0+(1-\beta_{k+1})x_k$.}
		\State{$x_{k+\frac{1}{2}}=\tilde{x}_{k+1}-(1-\beta_{k+1})s_{k+1}(F(x_k)+\tilde{g}_k)$.}
		\State{$x_{k+1}=J_{s_{k+1}G}\left(\tilde{x}_{k+1}-s_{k+1}F(x_{k+\frac{1}{2}})\right)$.}
		\State{$\tilde{g}_{k+1}=s_{k+1}^{-1}\left(\tilde{x}_{k+1}-x_{k+1}-s_{k+1}F(x_{k+\frac{1}{2}})\right)$.}
		\State{$\Sigma_{k+1}=\Sigma_k+\frac{s_{k+1}}{2}.$}
		\EndFor
	\end{algorithmic}
\end{algorithm}
\begin{remark}
	If $F$ is $L$-Lipschitz continuous and $s_{k+1}\equiv s\in\left(0,\frac{1}{L}\right]$, then $\Sigma_k=\frac{ks}{2}$ and $\beta_{k+1}=\frac{1}{k+1}$. Thus Algorithm \ref{al:ucfeg} is the same as the com-FEG method (with $\rho=0$) in \cite{yang2024}. Yang et al. proved that the convergence rate of Algorithm \ref{al:ucfeg} is\[
	\mathrm{dist}(0,T(x_K))^2\leq O(K^{-2}).
	\]
\end{remark}
\begin{theorem}[Convergence Rate]
	Let $\{x_k\}$, $\{x_{k+\frac{1}{2}}\}$, $\{s_{k+1}\}$ and $\{\Sigma_k\}$ be the sequences generated by Algorithm \ref{al:ucfeg}. If $F=\sum_{i=1}^NF_i$, $F_i$ satisfies \eqref{eq:holder} with $\alpha_i\in\left(\frac{1}{3},1\right]$ for all $i=1, \cdots, N$, $G$ is maximally monotone, $T$ is monotone,  $\alpha=\min_{1\leq i\leq N}\alpha_i$, and $0<s\leq\min_{1\leq i\leq N}\left(\frac{1}{(N^2L_i^2\alpha_i)^{\frac{1}{2\alpha_i}}}\right)$, then we have the following  convergence rate:\[
	\mathrm{dist}(0,T(x_K))^2\leq O\left(\left(\frac{1}{(3\alpha-1)^2s^2}+\sum_{i=1}^N\frac{(1-\alpha_i)\ln(K)}{N\alpha_i}\right)K^{-(3\alpha-1)}\right).
	\]
\end{theorem}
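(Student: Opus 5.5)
We will run a Lyapunov argument with the function $\E(k)$ from \eqref{eq:lyapunovh}, imitating the Yang et al.\ proof for com-FEG. The Lipschitz step is replaced by the Young-type ``sup over $\lambda$'' trick used for Tseng's method above. Two facts we will use throughout are $\tilde{t}_{k+1}=F(x_{k+1})+\tilde g_{k+1}\in T(x_{k+1})$ and $s_{k+1}\tilde t_{k+1}=\tilde x_{k+1}-x_{k+1}+s_{k+1}(F(x_{k+1})-F(x_{k+\frac12}))$.

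\textbf{Step 1 (one-step descent).} The goal is an inequality of the form
\[
\E(k+1)-\E(k)\le \Sigma_{k+1}\Bigl(\tfrac{\Sigma_{k+1}s_{k+1}}{2}\bigl(N s_{k+1}^{2}\textstyle\sum_i L_i^2\norm{x_{k+1}-x_{k+\frac12}}^{2\alpha_i}\,s_{k+1}^{-2}-\norm{x_{k+1}-x_{k+\frac12}}^2 s_{k+1}^{-2}\bigr)\Bigr)+\dots,
\]
that is, $\E(k+1)-\E(k)$ is bounded by a multiple of $\norm{F(x_{k+1})-F(x_{k+\frac12})}^2-s_{k+1}^{-2}\norm{x_{k+1}-x_{k+\frac12}}^2$. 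To obtain it:
\begin{itemize}
\item Expand $\E(k+1)-\E(k)$.
\item Use monotonicity of $T$ through $\inner{\tilde t_{k+1}-\tilde t_k,x_{k+1}-x_k}\ge0$.
\item Substitute $x_k-x_0=(\tilde x_{k+1}-x_0)/(1-\beta_{k+1})$. Here $\beta_{k+1}=s_{k+1}/(2\Sigma_{k+1})$, so $\Sigma_k/(1-\beta_{k+1})\cdot$ relations hold and $\Sigma_{k+1}=\Sigma_k+s_{k+1}/2$.
\end{itemize}
The algebra follows the Lipschitz case verbatim, because the weights $\Sigma_k$ and $\beta_{k+1}$ were designed to make the anchoring identities exact for variable steps. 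Using $x_{k+1}-x_{k+\frac12}=(1-\beta_{k+1})s_{k+1}\tilde t_k-s_{k+1}F(x_{k+\frac12})-s_{k+1}\tilde g_{k+1}$, all cross terms should collapse into a completed square plus this residual.

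\textbf{Step 2 (controlling the Hölder residual).} As in \eqref{eq:Fdifferenceupper}, write
\[
s_{k+1}^2\norm{F(x_{k+1})-F(x_{k+\frac12})}^2\le\sum_i\sup_{\lambda\ge0}\frac{N^2L_i^2s_{k+1}^2\lambda^{\alpha_i}-\lambda}{N}+\norm{x_{k+1}-x_{k+\frac12}}^2 .
\]
The supremum equals $\frac{1-\alpha_i}{N\alpha_i}(N^2L_i^2\alpha_i s_{k+1}^2)^{1/(1-\alpha_i)}$. With $s^{2\alpha_i}\le 1/(N^2L_i^2\alpha_i)$ and $s_{k+1}=s(k+1)^{-3(1-\alpha)/2}$, this is at most $\frac{1-\alpha_i}{N\alpha_i}s^2(k+1)^{-3(1-\alpha)/(1-\alpha_i)}\le \frac{1-\alpha_i}{N\alpha_i}s^2(k+1)^{-3}$. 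The summed error $\sum_k\Sigma_{k+1}^2\cdot(\text{error})/s_{k+1}$-type terms then behave like $\sum_k k^{2\cdot\frac{3\alpha-1}{2}}\,(k+1)^{-3}\cdot s_{k+1}^{-1}\cdots$. The exponent $\frac{3(1-\alpha)}{2}$ in the step size is chosen precisely so that this sum is $\sum_k (k+1)^{-1}$, producing the $\ln K$ term with coefficient $\sum_i\frac{1-\alpha_i}{N\alpha_i}$. This exponent bookkeeping is where I expect the main obstacle. I would first try to determine the exact weight multiplying the residual in Step 1 and then choose the bound in the supremum trick to match it, absorbing powers of $s$ using $s\le1$-type scaling from the constraint on $s$.

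\textbf{Step 3 (concluding).} Telescoping gives $\E(K)\le \E(0)+C\sum_i\frac{1-\alpha_i}{N\alpha_i}\ln K=C'\ln K$, since $\E(0)=0$. We also have the lower bound
\[
\E(K)\ge \Sigma_K^2\norm{\tilde t_K}^2-\Sigma_K\norm{\tilde t_K}\norm{x_K-x_0}.
\]
To handle the cross term, use monotonicity against $x^*$, namely $\inner{\tilde t_K,x_K-x^*}\ge0$, which gives $\E(K)\ge\Sigma_K^2\norm{\tilde t_K}^2-\Sigma_K\norm{\tilde t_K}\norm{x^*-x_0}$. Young's inequality then yields $\frac12\Sigma_K^2\norm{\tilde t_K}^2\le \E(K)+\frac12\norm{x_0-x^*}^2$. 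Finally, $\Sigma_K=\frac s2\sum_{k=1}^K k^{-3(1-\alpha)/2}\asymp \frac{s}{3\alpha-1}K^{\frac{3\alpha-1}{2}}$, which is positive and growing because $\alpha>\frac13$. Dividing by $\Sigma_K^2$ gives
\[
\mathrm{dist}(0,T(x_K))^2\le\norm{\tilde t_K}^2\le O\bigl((\tfrac{1}{(3\alpha-1)^2s^2}+\textstyle\sum_i\frac{(1-\alpha_i)\ln K}{N\alpha_i})K^{-(3\alpha-1)}\bigr).
\]
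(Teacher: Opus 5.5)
Your architecture is the paper's: the Lyapunov function \eqref{eq:lyapunovh}, monotonicity applied to $\inner{\tilde t_{k+1}-\tilde t_k,x_{k+1}-x_k}$, the $\sup_{\lambda\ge0}$ trick, telescoping from $\E(0)=0$, and the lower bound $\frac12\Sigma_K^2\norm{\tilde t_K}^2\le\E(K)+\frac12\norm{x_0-x^*}^2$. Your Step 2 estimate $s_{k+1}^2\norm{F(x_{k+1})-F(x_{k+\frac12})}^2-\norm{x_{k+1}-x_{k+\frac12}}^2\le\sum_i\frac{1-\alpha_i}{N\alpha_i}s^2(k+1)^{-3}$ is correct, and so is your Step 3.

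The gap is Step 1, which is where the proof actually lives. You do not derive the one-step inequality. Moreover, the weight you display, $\frac12\Sigma_{k+1}^2s_{k+1}$ in front of $\norm{F(x_{k+1})-F(x_{k+\frac12})}^2-s_{k+1}^{-2}\norm{x_{k+1}-x_{k+\frac12}}^2$, is wrong: the correct weight is $\Sigma_{k+1}^2$, with no extra factor $s_{k+1}$. The rate is decided entirely by this weight, so your Step 2 bookkeeping does not close as written. With the weight you carry into Step 2 ($\Sigma_{k+1}^2/s_{k+1}$ on your Step 2 quantity), the summand is of order $(k+1)^{3\alpha-1-3+\frac{3(1-\alpha)}{2}}=(k+1)^{\frac{3\alpha-5}{2}}$, not $(k+1)^{-1}$, so the claimed $\ln K$ structure does not follow from what you wrote. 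The proposed ``$s\le1$'' absorption is not implied by the hypothesis on $s$, and it turns out not to be needed.

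The missing computation is short. Put $w_{k+1}:=F(x_{k+\frac12})+\tilde g_{k+1}$. Since $1-\beta_{k+1}=\Sigma_k/\Sigma_{k+1}$, the algorithm gives $x_{k+1}=\tilde x_{k+1}-s_{k+1}w_{k+1}$ and $x_{k+\frac12}=\tilde x_{k+1}-\frac{\Sigma_k}{\Sigma_{k+1}}s_{k+1}\tilde t_k$, hence $2\Sigma_{k+1}x_{k+1}-2\Sigma_kx_k-s_{k+1}x_0=-2\Sigma_{k+1}s_{k+1}w_{k+1}$. Using this and $\Sigma_{k+1}-\Sigma_k=s_{k+1}/2$, one checks
\begin{align*}
&\Sigma_{k+1}\inner{\tilde t_{k+1},x_{k+1}-x_0}-\Sigma_k\inner{\tilde t_k,x_k-x_0}\\
&\quad=-2\Sigma_{k+1}\inner{\Sigma_{k+1}\tilde t_{k+1}-\Sigma_k\tilde t_k,w_{k+1}}-\frac{2\Sigma_k\Sigma_{k+1}}{s_{k+1}}\inner{\tilde t_{k+1}-\tilde t_k,x_{k+1}-x_k}.
\end{align*}
Drop the last term by monotonicity (its coefficient vanishes for $k=0$) and write $-2\inner{\tilde t_{k+1},w_{k+1}}=\norm{\tilde t_{k+1}-w_{k+1}}^2-\norm{\tilde t_{k+1}}^2-\norm{w_{k+1}}^2$. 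The square then completes exactly:
\begin{align*}
\E(k+1)-\E(k)&\le\Sigma_{k+1}^2\norm{F(x_{k+1})-F(x_{k+\frac12})}^2-\norm{\Sigma_{k+1}w_{k+1}-\Sigma_k\tilde t_k}^2\\
&=\frac{\Sigma_{k+1}^2}{s_{k+1}^2}\Bigl(s_{k+1}^2\norm{F(x_{k+1})-F(x_{k+\frac12})}^2-\norm{x_{k+1}-x_{k+\frac12}}^2\Bigr).
\end{align*}
So the weight on your Step 2 quantity is $\Sigma_{k+1}^2s_{k+1}^{-2}$. Since $s^2/s_{k+1}^2=(k+1)^{3(1-\alpha)}$, your bound becomes the paper's per-step error $\sum_i\frac{(1-\alpha_i)\Sigma_{k+1}^2}{N\alpha_i(k+1)^{3\alpha}}$. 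With $\Sigma_{k+1}^2\lesssim\frac{s^2}{(3\alpha-1)^2}(k+1)^{3\alpha-1}$, the summand is $O((k+1)^{-1})$, and all powers of $s$ cancel after dividing by $\Sigma_K^2$.

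Your asymptotic $\Sigma_K\asymp\frac{s}{3\alpha-1}K^{\frac{3\alpha-1}{2}}$ is correct. It gives the leading constant $\frac{(3\alpha-1)^2}{s^2}$, which is at most $\frac{16}{(3\alpha-1)^2s^2}$, so the stated bound follows.
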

\begin{proof}
	We begin by computing the difference of the sequence $\{\mathcal{E}(k)\}$. 
	A direct calculation gives\begin{align*}
		&\ \E(k+1)-\E(k)\\
		=&\ \Sigma_{k+1}^2\norm{\tilde{t}_{k+1}}^2-\Sigma_k^2\norm{\tilde{t}_k}^2+\Sigma_{k+1}\inner{\tilde{t}_{k+1},x_{k+1}-x_0}+\Sigma_k\inner{\tilde{t}_k,x_k-x_0}\\
		=&\ \Sigma_{k+1}^2\norm{\tilde{t}_{k+1}}^2-\Sigma_k^2\norm{\tilde{t}_k}^2 +\frac{2\Sigma_k+s_{k+1}}{2s_{k+1}}\inner{\tilde{t}_{k+1},(2\Sigma_k+s_{k+1})x_{k+1}-2\Sigma_kx_k-s_{k+1}x_0}\\
		&\ +\frac{\Sigma_k}{s_{k+1}}\inner{\tilde{t}_k,(2\Sigma_k+s_{k+1})x_{k+1}-2\Sigma_kx_k-s_{k+1}x_0}-\frac{(2\Sigma_k+s_{k+1})\Sigma_k}{s_{k+1}}\inner{\tilde{t}_{k+1}-\tilde{t}_k,x_{k+1}-x_k}\\
		\leq&\ \Sigma_{k+1}^2\norm{\tilde{t}_{k+1}}^2-\Sigma_k^2\norm{\tilde{t}_k}^2-\frac{(2\Sigma_k+s_{k+1})^2}{2}\inner{\tilde{t}_{k+1},F(x_{k+\frac{1}{2}})+\tilde{g}_{k+1}}\\
		&\ +(2\Sigma_k+s_{k+1})\Sigma_k\inner{\tilde{t}_k,F(x_{k+\frac{1}{2}})+\tilde{g}_{k+1}}.
	\end{align*}
	First, by using the equation\[
	-\inner{a,b}=\frac{1}{2}\norm{a-b}^2-\frac{1}{2}\norm{a}^2-\frac{1}{2}\norm{b}^2,\quad\forall a, b\in\mathcal{H},
	\]
	we have\begin{align*}
		&\ -\frac{(2\Sigma_k+s_{k+1})^2}{2}\inner{\tilde{t}_{k+1},F(x_{k+\frac{1}{2}})+\tilde{g}_{k+1}}\\
		=&\ \frac{(2\Sigma_k+s_{k+1})^2}{4}\left(\norm{F(x_{k+1})-F(x_{k+\frac{1}{2}})}^2-\norm{\tilde{t}_{k+1}}^2-\norm{F(x_{k+\frac{1}{2}})+\tilde{g}_{k+1}}^2\right).
	\end{align*}
Next, exploiting the decomposition $F=\sum_{i=1}^N F_i$ and the H\"{o}lder condition \eqref{eq:holder} for each $F_i$, we have\begin{align*}
		&\ \frac{(2\Sigma_k+s_{k+1})^2}{4}\norm{F(x_{k+1})-F(x_{k+\frac{1}{2}})}^2\\
		\leq&\ \frac{(2\Sigma_k+s_{k+1})^2}{4}\sum_{i=1}^N NL_i^2\norm{x_{k+1}-x_{k+\frac{1}{2}}}^{2\alpha_i}\\
		=&\ \sum_{i=1}^N\frac{(2\Sigma_k+s_{k+1})^2NL_i^2s_{k+1}^{2\alpha_i}}{4}\norm{F(x_{k+\frac{1}{2}})+\tilde{g}_{k+1}-\frac{2\Sigma_k}{2\Sigma_k+s_{k+1}}\tilde{t}_k}^{2\alpha_i}\\
		&\ -\frac{(2\Sigma_k+s_{k+1})^2}{4}\norm{F(x_{k+\frac{1}{2}})+\tilde{g}_{k+1}-\frac{2\Sigma_k}{2\Sigma_k+s_{k+1}}\tilde{t}_k}^2\\
		&\ +\frac{(2\Sigma_k+s_{k+1})^2}{4}\norm{F(x_{k+\frac{1}{2}})+\tilde{g}_{k+1}-\frac{2\Sigma_k}{2\Sigma_k+s_{k+1}}\tilde{t}_k}^2\\
		\leq&\ \frac{(2\Sigma_k+s_{k+1})^2}{4N}\sum_{i=1}^N\sup_{\lambda\geq 0}(N^2L_i^2s_{k+1}^{2\alpha_i}\lambda^{\alpha_i}-\lambda) -(2\Sigma_k+s_{k+1})\Sigma_k\inner{F(x_{k+\frac{1}{2}})+\tilde{g}_{k+1},\tilde{t}_k}\\
		&\ +\frac{(2\Sigma_k+s_{k+1})^2}{4}\norm{F(x_{k+\frac{1}{2}})+\tilde{g}_{k+1}}^2+\Sigma_k^2\norm{\tilde{t}_k}^2.
	\end{align*}
	If $\alpha_i=1$, then we have \[
	\sup_{\lambda\geq 0}N^2L_i^2s_{k+1}^{2\alpha_i}\lambda-\lambda=\sup_{\lambda\geq 0}(N^2L_i^2s^{2\alpha_i}(k+1)^{-3(1-\alpha)}-1)\lambda=0.
	\]
	If instead $\alpha_i\in\left(\frac{1}{3},1\right)$, then using the relation $s_{k+1}=\frac{s}{(k+1)^{\frac{3(1-\alpha)}{2}}}$ gives
	\begin{align*}
		&\ \sup_{\lambda\geq 0}N^2L_i^2s_{k+1}^{2\alpha_i}\lambda^{\alpha_i}-\lambda\\
		=
		&\ \frac{(1-\alpha_i)(2\Sigma_k+s_{k+1})^2(N^2L_i^2s_{k+1}^{2\alpha_i}\alpha_i)^{\frac{1}{1-\alpha_i}}}{4N\alpha_i}\\
		=&\ \frac{(1-\alpha_i)\Sigma_{k+1}^2(N^2L_i^2s_{k+1}^{2\alpha_i}\alpha_i)^{\frac{1}{1-\alpha_i}}}{N\alpha_i}\\
		=&\ \frac{(1-\alpha_i)\Sigma_{k+1}^2(N^2L_i^2\alpha_i s^{2\alpha_i})^{\frac{1}{1-\alpha_i}}(k+1)^{-\frac{3(1-\alpha)\alpha_i}{1-\alpha_i}}}{N\alpha_i}\\
		\leq&\ \frac{(1-\alpha_i)\Sigma_{k+1}^2}{N\alpha_i(k+1)^{3\alpha}},
	\end{align*}
	where the last inequality follows from $-\frac{3(1-\alpha)\alpha_i}{1-\alpha_i}\leq 3\alpha$. 
	Note that this bound also holds when $\alpha_i=1$. 
	
	 Combining all the preceding estimates, we obtain\begin{align*}
		&\ -\frac{(2\Sigma_k+s_{k+1})^2}{2}\inner{\tilde{t}_{k+1},F(x_{k+\frac{1}{2}})+\tilde{g}_{k+1}}\\
		\leq&\ \sum_{i=1}^N\frac{(1-\alpha_i)(2\Sigma_k+s_{k+1})^2}{4N\alpha_i(k+1)^{3\alpha}}-(2\Sigma_k+s_{k+1})\Sigma_k\inner{F(x_{k+\frac{1}{2}})+\tilde{g}_{k+1},\tilde{t}_k}\\ &\ -\frac{(2\Sigma_k+s_{k+1})^2}{4}\norm{\tilde{t}_{k+1}}^2+\Sigma_k^2\norm{\tilde{t}_k}^2.
	\end{align*}
	Substituting the above inequality back into the expression for $\E(k+1)-\E(k)$ yields the simplified recursive bound\begin{align*}
		 \E(k+1)-\E(k)\leq\sum_{i=1}^N\frac{(1-\alpha_i)(2\Sigma_k+s_{k+1})^2}{4N\alpha_i(k+1)^{3\alpha}}=\sum_{i=1}^N\frac{(1-\alpha_i)\Sigma_{k+1}^2}{N\alpha_i(k+1)^{3\alpha}}.
	\end{align*}
	Moreover, for any $x^*\in\mathrm{SOL}$, we have the following lower bound for $\E(k)$:\begin{align*}
		\E(k)=&\ \Sigma_k^2\norm{\tilde{t}_k}^2+\Sigma_k\inner{\tilde{t}_k,x_k-x^*}+\Sigma_k\inner{\tilde{t}_k,x^*-x_0}\\
		\geq&\ \frac{\Sigma_k^2}{2}\norm{\tilde{t}_k}^2+\Sigma_k\inner{\tilde{t}_k,x_k-x^*}-\frac{1}{2}\norm{x_0-x^*}^2,
	\end{align*}
	which directly implies
	\begin{align*}
		&\ \frac{\Sigma_K^2}{2}\mathrm{dist}(0,T(x_K))^2\leq\frac{\Sigma_K^2}{2}\norm{\tilde{t}_K}^2\\
		\leq&\ \E(K)+\frac{1}{2}\norm{x_0-x^*}^2\\
		=&\ \frac{1}{2}\norm{x_0-x^*}^2+\sum_{k=0}^{K-1}\E(k+1)-\E(k)\\
		\leq&\ \frac{1}{2}\norm{x_0-x^*}^2+\sum_{k=0}^{K-1}\sum_{i=1}^N\frac{(1-\alpha_i)\Sigma_{k+1}^2}{N\alpha_i(k+1)^{3\alpha}}.
	\end{align*}
	Since $\alpha_i\in(\frac{1}{3},1]$ for all $i=1, \cdots, N$, we have $\alpha\in(\frac{1}{3},1]$ and $-\frac{3(1-\alpha)}{2}\in (-1,0)$, which implies that $\Sigma_{k+1}$ is the same order as $\frac{(3\alpha-1)s}{2}k^{1-\frac{3(1-\alpha)}{2}}$. Also, because\[
	2-3(1-\alpha)-3\alpha=-1,
	\]
	 we have\[
	\sum_{k=0}^{K-1}\sum_{i=1}^N\frac{(1-\alpha_i)\Sigma_{k+1}^2}{N\alpha_i(k+1)^{3\alpha}}\sim\sum_{i=1}^N\frac{(1-\alpha_i)(3\alpha-1)^2s^2\ln(K)}{4N\alpha_i}.
	\]
	
	Combining these results, we finally deduce that\begin{align*}
		&\ \mathrm{dist}(0,T(x_K))^2\leq\norm{\tilde{t}_K}^2\\
		\leq&\  \Sigma_K^{-2}\E(0)+\Sigma_K^{-2}\sum_{k=0}^{K-1}\sum_{i=1}^N\frac{(1-\alpha_i)\Sigma_{k+1}^2}{4N\alpha_i(k+1)^{3\alpha}}\\
		\leq&\ O\left(\left(\frac{1}{(3\alpha-1)^2s^2}+\sum_{i=1}^N\frac{(1-\alpha_i)\ln(K)}{N\alpha_i}\right)K^{-(3\alpha-1)}\right).
	\end{align*}
\end{proof}

\subsection{Generalized Symplectic Composite Extra-gradient Method}
This section, we study the convergence rate of Algorithm \ref{al:usceg}.

\begin{algorithm}[h]
	\caption{Generalized Symplectic Composite Extra-gradient Method, GSCEG}
	\label{al:usceg}
	\begin{algorithmic}
		\State{\textbf{Initialization: } $x_0$, $z_0=x_0$, $r>1$, $d\in(0,2(r-1))$, $\Sigma_0=0$, $0<s\leq\min_{1\leq i\leq N}\left(\frac{1}{(N^2L_i^2\alpha_i)^{\frac{1}{2\alpha_i}}}\right)$.}
		\For{$k=0,1,\dots$}
		\State{$s_{k+1}=\frac{s}{(k+1)^{\frac{3(1-\alpha)}{2}}}$.}
		\State{$\beta_{k+1}=\frac{rs_{k+1}}{2\Sigma_k+rs_{k+1}}$. }
		\State{$\tilde{x}_{k+1}=\beta_{k+1} z_k+(1-\beta_{k+1})x_k$.}
		\State{$x_{k+\frac{1}{2}}=\tilde{x}_{k+1}-(1-\beta_{k+1})s_{k+1}(F(x_k)+\tilde{g}_k)$.}
		\State{$x_{k+1}=J_{s_{k+1}G}\left(\tilde{x}_{k+1}-s_{k+1}F(x_{k+\frac{1}{2}})\right)$.}
		\State{$\tilde{g}_{k+1}=s_{k+1}^{-1}\left(\tilde{x}_{k+1}-x_{k+1}-s_{k+1}F(x_{k+\frac{1}{2}})\right)$.}
		\State{$z_{k+1}=z_k-\frac{ds_{k+1}}{2r}(F(x_{k+1})+\tilde{g}_{k+1})$.}
		\State{$\Sigma_{k+1}=\Sigma_k+\frac{s_{k+1}}{2}.$}
		\EndFor
		\Return $x_{k+1}$.
	\end{algorithmic}
\end{algorithm}

\begin{remark}
	If  $F$ is $L$-Lipschitz continuous, then choose $s_{k+1}\equiv s\in(0,\frac{1}{L}]$, we have $\Sigma_k=\frac{ks}{2}$ and $\beta_{k+1}=\frac{r}{k+r}$. Then Algorithm \ref{al:usceg} reduces to the SCEG method (with $\rho=0$). It has been proven that the convergence rate of the SCEG method is $\mathrm{dist}(0,T(x_K))^2\leq O(1/K^2)$ if $s\in(0,\frac{1}{L}]$ and $\mathrm{dist}(0,T(x_K))^2\leq o(1/K^2)$ if $s\in\left(0,\frac{1}{L}\right)$. 
\end{remark}

Similarly, we consider the following Lyapunov function:
\begin{equation}
	\mathcal{E}(k) := d\Sigma_k^2\|\tilde{t}_k\|^2 + dr\Sigma_k\langle \tilde{t}_k, x_k - z_k \rangle + \frac{r^3 - r^2}{2}\norm{z_k-x^*}^2.
	\label{eq:lyapunovconstant}
\end{equation}
where $x^*\in\mathrm{SOL}$, $\tilde{t}_k=\begin{cases}
	0,&k=0;\\
	F(x_k)+\tilde{g}_k,& k\geq 1.
\end{cases}$ By the definition of $J_{s_{k+1}G}$, we have $\tilde{g}_{k+1}\in G(x_{k+1})$, $\tilde{t}_{k+1}\in T(x_{k+1})$, $\forall k\geq 0$.
\begin{lemma}
	\label{lem:lyapunovform}
	The Lyapunov function defined by \eqref{eq:lyapunovconstant} can be transformed into the following form:
	\begin{align*}
		\E(k)=&\ d\Sigma_k^2\left(1-\frac{d}{2(r-1)}\right)\norm{\tilde{t}_k}^2+dr\Sigma_k\inner{\tilde{t}_k,x_k-x^*}\\
		&\ +\frac{1}{2}\norm{\frac{d\Sigma_k}{\sqrt{r-1}}\tilde{t}_k+\sqrt{r^3-r^2}(x^*-z_k)}^2.
	\end{align*}
	If $r>1$, $d\in (0,2(r-1))$, $\E(k)$ can be represented as the sum of three terms.
\end{lemma}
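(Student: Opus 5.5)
The identity is purely algebraic, so the plan is to expand the proposed right-hand side and match it term by term with \eqref{eq:lyapunovconstant}. No property of $F$, $G$ or the iteration is needed, only the definitions of $\E(k)$ and $x^*$.

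First, split the cross term in \eqref{eq:lyapunovconstant} by writing $x_k-z_k=(x_k-x^*)+(x^*-z_k)$:
\[
dr\Sigma_k\inner{\tilde{t}_k,x_k-z_k}=dr\Sigma_k\inner{\tilde{t}_k,x_k-x^*}+dr\Sigma_k\inner{\tilde{t}_k,x^*-z_k}.
\]
The first piece already appears in the target expression.

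Next, expand the square in the target:
\begin{align*}
\frac{1}{2}\norm{\frac{d\Sigma_k}{\sqrt{r-1}}\tilde{t}_k+\sqrt{r^3-r^2}(x^*-z_k)}^2
=&\ \frac{d^2\Sigma_k^2}{2(r-1)}\norm{\tilde{t}_k}^2+\frac{r^3-r^2}{2}\norm{z_k-x^*}^2\\
&\ +d\Sigma_k\sqrt{\frac{r^3-r^2}{r-1}}\inner{\tilde{t}_k,x^*-z_k}.
\end{align*}
The only point to check is the coefficient of the cross term. Since $\sqrt{(r^3-r^2)/(r-1)}=\sqrt{r^2}=r$ for $r>0$, this coefficient equals $dr\Sigma_k$, which is exactly the leftover piece from the first step.

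Adding $d\Sigma_k^2\bigl(1-\frac{d}{2(r-1)}\bigr)\norm{\tilde{t}_k}^2$ to the square, the $\frac{d^2\Sigma_k^2}{2(r-1)}\norm{\tilde t_k}^2$ terms cancel. What remains is $d\Sigma_k^2\norm{\tilde{t}_k}^2$, which recovers \eqref{eq:lyapunovconstant} and proves the identity. For the final sentence of the lemma, note what happens when $r>1$ and $d\in(0,2(r-1))$. Then $\sqrt{r-1}$ and $\sqrt{r^3-r^2}$ are real, and the coefficient $d\bigl(1-\frac{d}{2(r-1)}\bigr)$ is positive. So $\E(k)$ is the sum of three terms: a nonnegative multiple of $\norm{\tilde t_k}^2$, the term $dr\Sigma_k\inner{\tilde t_k,x_k-x^*}$ (which is nonnegative by monotonicity of $T$, since $\tilde t_k\in T(x_k)$ and $0\in T(x^*)$, with $\tilde t_0=0$), and a squared norm.

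There is no real obstacle here. The only step needing care is the simplification $\sqrt{r^3-r^2}/\sqrt{r-1}=r$, which uses $r>1$.
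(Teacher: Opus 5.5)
Your proposal is correct and follows essentially the paper's own argument. The paper also splits $x_k-z_k=(x_k-x^*)+(x^*-z_k)$ and applies the polarization identity with $u=d\Sigma_k(r-1)^{-1/2}\tilde{t}_k$ and $v=\sqrt{r^3-r^2}\,(x^*-z_k)$, which is your expansion of the square read in the opposite direction, and it likewise uses monotonicity of $T$ to get nonnegativity of the middle term.
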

\begin{proof}
	We first observe that
	\[
	dr\Sigma_k\inner{\tilde{t}_k,x_k-z_k}=dr\Sigma_k\inner{\tilde{t}_k,x_k-x^*}+dr\Sigma_k\inner{\tilde{t}_k,x^*-z_k}.
	\]
	Applying the identity
	\[
	\inner{u,v}=\frac{1}{2}\norm{u+v}^2-\frac{1}{2}\norm{u}^2-\frac{1}{2}\norm{v}^2
	\]
	with the choices $u=d\Sigma_k(r-1)^{-\frac{1}{2}}\tilde{t}_k$ and $v=\sqrt{r^3-r^2}(x^*-z_k)$, we obtain
	\begin{align*}
		\mathcal{E}(k)=&\ d\Sigma_k^2\left(1-\frac{d}{2(r-1)}\right)\norm{\tilde{t}_k}^2+dr\Sigma_k\inner{\tilde{t}_k,x_k-x^*}\\&\ +\frac{1}{2}\norm{\frac{d\Sigma_k}{\sqrt{r-1}}\tilde{t}_k+\sqrt{r^3-r^2}(x^*-z_k)}^2.
	\end{align*}
	Since $d\in(0,2(r-1))$, the coefficient $1-\frac{d}{2(r-1)}$ is nonnegative. Moreover, from the monotonicity, we have $\inner{\tilde{t}_k,x_k-x^*}\geq 0$. Therefore, $\mathcal{E}(k)$ is expressed as the sum of three nonnegative terms.
\end{proof}
\begin{lemma}
	\label{lem:nonincreasing}
	Let $\{x_k\}$, $\{x_{k+\frac{1}{2}}\}$, $\{z_k\}$, $\{s_{k+1}\}$ and $\{\Sigma_k\}$ be the sequences generated by Algorithm \ref{al:usceg}. If $F=\sum_{i=1}^N$, $F_i$ satisfies \eqref{eq:holder} with $\alpha_i\in\left(\frac{1}{3},1\right]$ for all $i=1, \cdots, N$, $G$ is maximally monotone, $T$ is monotone, $r>1$,  $d\in (0,2(r-1))$, $\alpha=\min_{1\leq i\leq N}\alpha_i$, and $0<s\leq\min_{1\leq i\leq N}\left(\frac{1}{(N^2L_i^2\alpha_i)^{\frac{1}{2\alpha_i}}}\right)$, then $\{\E(k)\}$ defined by \eqref{eq:lyapunovconstant} satisfies\begin{align*}
		\E(k+1)-\E(k)\leq\sum_{i=1}^N\frac{(1-\alpha_i)(2\Sigma_k+rs_{k+1})^2}{4N\alpha_i(k+1)^{3\alpha}}.
	\end{align*}
\end{lemma}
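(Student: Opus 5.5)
I will mirror the proof of the convergence-rate theorem for Algorithm \ref{al:ucfeg}: expand $\E(k+1)-\E(k)$, use the update rules to express every new quantity through $\tilde{t}_{k+1}$, $\tilde{t}_k$ and $F(x_{k+\frac{1}{2}})+\tilde{g}_{k+1}$, and control the single bad term coming from $F(x_{k+1})-F(x_{k+\frac{1}{2}})$ by the same Young-type supremum argument.

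\textbf{Step 1: rewrite the updates.} Set $a_k:=2\Sigma_k+rs_{k+1}$. From $\tilde{x}_{k+1}=\beta_{k+1}z_k+(1-\beta_{k+1})x_k$ and the definition of $\tilde{g}_{k+1}$,
\[
a_k x_{k+1}=2\Sigma_k x_k+rs_{k+1}z_k-a_k s_{k+1}\bigl(F(x_{k+\frac{1}{2}})+\tilde{g}_{k+1}\bigr).
\]
This gives $x_{k+1}-x_k$ and $x_{k+1}-z_k$ in terms of $x_k-z_k$ and $w_{k+1}:=F(x_{k+\frac{1}{2}})+\tilde{g}_{k+1}$. Likewise
\[
x_{k+1}-x_{k+\frac{1}{2}}=-s_{k+1}\Bigl(w_{k+1}-\tfrac{2\Sigma_k}{a_k}\tilde{t}_k\Bigr),
\]
since $1-\beta_{k+1}=2\Sigma_k/a_k$. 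For the $z$-terms I will use
\[
\tfrac{r^3-r^2}{2}\bigl(\norm{z_{k+1}-x^*}^2-\norm{z_k-x^*}^2\bigr)=-\tfrac{(r-1)rds_{k+1}}{2}\inner{\tilde{t}_{k+1},z_k-x^*}+\tfrac{(r-1)d^2s_{k+1}^2}{8}\norm{\tilde{t}_{k+1}}^2.
\]
The cross term $dr\Sigma_{k+1}\inner{\tilde{t}_{k+1},x_{k+1}-z_{k+1}}$ splits into a part with $x_{k+1}-z_k$, handled by the identity above, plus $\frac{d^2s_{k+1}\Sigma_{k+1}}{2}\norm{\tilde{t}_{k+1}}^2$.

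\textbf{Step 2: collect terms.} The terms $\inner{\tilde{t}_{k+1},z_k-x^*}$ should combine with $\inner{\tilde{t}_{k+1},x_{k+1}-x^*}\ge0$; this is where monotonicity of $T$ and $\tilde{t}_{k+1}\in T(x_{k+1})$ enter. The difference $\tilde{t}_{k+1}-\tilde{t}_k$ paired with $x_{k+1}-x_k$ is also nonnegative, exactly as in the GCFEG computation. After dropping these, I expect
\[
\E(k+1)-\E(k)\le \text{(nonpositive quadratic in }\tilde t_{k+1},\tilde t_k,w_{k+1})-d\tfrac{a_k^2}{2}\inner{\tilde{t}_{k+1},w_{k+1}}+d a_k\Sigma_k\inner{\tilde{t}_k,w_{k+1}},
\]
up to constants depending on $d$ and $r$. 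The residual $\norm{\tilde{t}_{k+1}}^2$ coefficients (of order $d^2 s_{k+1}\Sigma_{k+1}$ and $d^2s_{k+1}^2$) must be absorbed by the $-\frac{a_k^2}{4}\norm{\tilde{t}_{k+1}}^2$ produced in Step 3. This uses $\Sigma_{k+1}^2-\frac{a_k^2}{4}\le0$, which holds since $r>1$ and $a_k^2/4=(\Sigma_k+rs_{k+1}/2)^2\ge\Sigma_{k+1}^2$, with slack of order $(r-1)s_{k+1}\Sigma_k$. The condition $d<2(r-1)$ is what should make this slack suffice.

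\textbf{Step 3: Hölder bound.} As before, I write
\[
-\inner{\tilde{t}_{k+1},w_{k+1}}=\tfrac12\norm{F(x_{k+1})-F(x_{k+\frac{1}{2}})}^2-\tfrac12\norm{\tilde t_{k+1}}^2-\tfrac12\norm{w_{k+1}}^2.
\]
I then bound the first term by $N\sum_iL_i^2s_{k+1}^{2\alpha_i}\lambda^{\alpha_i}$ with $\lambda=\norm{w_{k+1}-\frac{2\Sigma_k}{a_k}\tilde t_k}^2$, add and subtract $\lambda$, and take $\sup_{\lambda\ge0}$. With the choice of $s$, each supremum is at most $\frac{(1-\alpha_i)}{N\alpha_i(k+1)^{3\alpha}}$, exactly as computed in the GCFEG theorem. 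The leftover $\frac{a_k^2}{4}\lambda$ expands to cancel the $\norm{w_{k+1}}^2$ term and the $\Sigma_k a_k\inner{\tilde t_k,w_{k+1}}$ cross term, leaving $\Sigma_k^2\norm{\tilde t_k}^2$, which matches $-\Sigma_k^2\norm{\tilde t_k}^2$ from $\E(k)$. What remains is $\sum_i\frac{(1-\alpha_i)a_k^2}{4N\alpha_i(k+1)^{3\alpha}}$, the claimed bound.

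\textbf{Main obstacle.} The main difficulty is the bookkeeping in Step 2. The extra $z$-dependence and the factors $d,r$ must cancel exactly, and the leftover $\norm{\tilde t_{k+1}}^2$ coefficient must be shown nonpositive using $d\in(0,2(r-1))$. I would first verify this in the Lipschitz case ($\alpha_i=1$), where the supremum vanishes and the SCEG analysis of \cite{yuan2025} applies. I would then check that replacing $s$ by a decreasing $s_{k+1}$ only changes $\Sigma_{k+1}-\Sigma_k=s_{k+1}/2$ consistently.
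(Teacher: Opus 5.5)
Your plan follows the paper's proof step for step. That includes the two relations for $x_{k+1}-x_k$ and $x_k-z_k$, the split of $\E(k+1)-\E(k)$, discarding the two monotonicity terms, the Young-type supremum, and the coefficient identity $d\bigl(\Sigma_{k+1}s_{k+1}+\frac{(r-1)s_{k+1}^2}{4}\bigr)\bigl(r-1-\frac{d}{2}\bigr)\geq 0$. However, the last line of your Step 3 loses a factor $d$. In your own Step 2 the bad inner product enters as $-\frac{d a_k^2}{2}\inner{\tilde t_{k+1},w_{k+1}}$, because everything produced by the cross term $dr\Sigma_k\inner{\tilde t_k,x_k-z_k}$ carries $d$. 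This is also why your cancellation against $-d\Sigma_k^2\norm{\tilde t_k}^2$ works. So after the supremum argument the remainder is
\[
d\sum_{i=1}^N\frac{(1-\alpha_i)a_k^2}{4N\alpha_i(k+1)^{3\alpha}},
\]
and the negative multiple of $\norm{\tilde t_{k+1}}^2$ cannot absorb this constant. Your argument therefore gives the lemma only for $d\le 1$, whereas $d$ ranges over $(0,2(r-1))$; the experiments use $d=9.5$.

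This is a defect of the statement, not only of your argument. Take $\mathcal{H}=\mathbb{R}$, $N=1$, $G=0$, and $F(x)=\sqrt{\max\{x,0\}}-\frac{3}{8}$, which is monotone and $\frac12$-H\"older with $L_1=1$, so $s=2$ is admissible. Take also $r=10$, $d=2$, $x_0=\frac94$. Then $x^*=\frac{9}{64}$, $x_1=0$, $\tilde t_1=-\frac38$, $z_1=\frac{93}{40}$, $\Sigma_1=1$, and $\E(1)-\E(0)=\frac{20817}{128}\approx 162.6$. The claimed right-hand side at $k=0$ is only $100$, while $d\cdot 100=200$ is consistent. The paper's proof contains the same slip: in its displayed bound for II, the H\"older remainder is written without the factor $d$ that every other term of II carries. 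If you keep the $d$, your argument proves the corrected inequality (the stated bound multiplied by $d$). That change affects only constants in the subsequent convergence theorem.
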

\begin{proof}
	Step 1: derive some useful equalities and inequalities. We begin with the definition of $\tilde{g}_{k+1}$, which gives\[
	x_{k+1}=\tilde{x}_{k+1}-s_{k+1}(F(x_{k+\frac{1}{2}})+\tilde{g}_{k+1}).
	\] 
	Combining this with the definition of $\tilde{x}_{k+1}$, we obtain the following two useful relations:\begin{align}
		2\Sigma_k(x_{k+1}-x_k) &= rs_{k+1}(z_k-x_{k+1})-(2\Sigma_k+rs_{k+1})s_{k+1}(F(x_{k+\frac{1}{2}})+\tilde{g}_{k+1}),\label{eq:useful1}\\
		rs_{k+1}(x_k-z_k) &= -(2\Sigma_k+rs_{k+1})\left((x_{k+1}-x_k)+s_{k+1}(F(x_{k+\frac{1}{2}})+\tilde{g}_{k+1})\right).\label{eq:useful2}
	\end{align}
	
	Step 2: divide $\E(k+1)-\E(k)$ into three parts. By the definition of $\E(k)$, we have\begin{align*}
		&\ \E(k+1)-\E(k)\\
		 =&\ \underbrace{d\Sigma_{k+1}^2\|\tilde{t}_{k+1}\|^2-d\Sigma_k^2\|\tilde{t}_k\|^2}_{\text{I}} +\underbrace{dr\Sigma_{k+1}\inner{\tilde{t}_{k+1},x_{k+1}-z_{k+1}}-dr\Sigma_k\inner{\tilde{t}_k,x_k-z_k}}_{\text{II}}\\
		&\ +\underbrace{\frac{r^3-r^2}{2}\norm{z_{k+1}-x^*}^2-\frac{r^3-r^2}{2}\norm{z_k-x^*}^2}_{\text{III}}.
	\end{align*}
	
	Step 3: estimate the upper bound of II. We first split II into three sub-terms:\begin{align*}
		\text{II}=&\ \underbrace{\frac{drs_{k+1}}{2}\inner{\tilde{t}_{k+1},x_{k+1}-z_{k+1}}}_{\text{II}_1}+\underbrace{dr\Sigma_k\inner{\tilde{t}_{k+1},x_{k+1}-x_k-z_{k+1}+z_k}}_{\text{II}_2}\\
		&\ +\underbrace{dr\Sigma_k\inner{\tilde{t}_{k+1}-\tilde{t}_k,x_k-z_k}}_{\text{II}_3}.
	\end{align*}
	We now estimate each of these components. From the recursive update of $z_{k+1}$, it follows that\[
	\text{II}_1=\frac{drs_{k+1}}{2}\inner{\tilde{t}_{k+1},x_{k+1}-z_k}+\frac{d^2s_{k+1}^2}{4}\norm{\tilde{t}_{k+1}}^2.
	\]
	Similarly, using \eqref{eq:useful1} and the same recursive relation, we get\begin{align*}
		\text{II}_2=&\ \frac{dr}{2}\inner{\tilde{t}_{k+1},rs_{k+1}(z_k-x_{k+1})-(2\Sigma_k+rs_{k+1})s_{k+1}(F(x_{k+\frac{1}{2}})+\tilde{g}_{k+1})}\\
		&\ +\frac{d^2s_{k+1}\Sigma_k}{2}\norm{\tilde{t}_{k+1}}^2.
	\end{align*}
	Likewise, applying \eqref{eq:useful2} together with the recursive rule of $z_{k+1}$ yields\[
	\text{II}_3=-\frac{d\Sigma_k(2\Sigma_k+rs_{k+1})}{s_{k+1}}\inner{\tilde{t}_{k+1}-\tilde{t}_k,x_{k+1}-x_k+s_{k+1}(F(x_{k+\frac{1}{2}})+\tilde{g}_{k+1})}.
	\]
	Summing the above expressions and simplifying, we obtain the combined form \begin{align*}
		\text{II} =&\ \frac{d(r^2-r)s_{k+1}}{2}\inner{\tilde{t}_{k+1},z_k-x_{k+1}}+\frac{d^2s_{k+1}\Sigma_{k+1}}{2}\norm{\tilde{t}_{k+1}}^2\\
		&\ -\frac{d(2\Sigma_k+rs_{k+1})^2}{2}\inner{\tilde{t}_{k+1},F(x_{k+\frac{1}{2}})+\tilde{g}_{k+1}} +d\Sigma_k(2\Sigma_k+rs_{k+1})\inner{\tilde{t}_k,F(x_{k+\frac{1}{2}})+\tilde{g}_{k+1}}\\
		&\ -\frac{d\Sigma_k(2\Sigma_k+rs_{k+1})}{s_{k+1}}\inner{\tilde{t}_{k+1}-\tilde{t}_k,x_{k+1}-x_k}.
	\end{align*}
	To proceed, we recall the elementary identity\begin{align*}
		&\ -\inner{\tilde{t}_{k+1},F(x_{k+\frac{1}{2}})+\tilde{g}_{k+1}}\\
		=&\ \frac{1}{2}\norm{F(x_{k+1})-F(x_{k+\frac{1}{2}})}^2-\frac{1}{2}\norm{\tilde{t}_{k+1}}^2 -\frac{1}{2}\norm{F(x_{k+\frac{1}{2}})+\tilde{g}_{k+1}}^2.
	\end{align*}
	Using the decomposition $F=\sum_{i=1}^N F_i$ and the H\"{o}lder condition \eqref{eq:holder} for each $F_i$, we estimate \begin{align*}
		&\ \frac{1}{2}\norm{F(x_{k+1})-F(x_{k+\frac{1}{2}})}^2\\
		\leq&\  \sum_{i=1}^N\frac{NL_i^2s_{k+1}^{2\alpha_i}}{2}\norm{\frac{2\Sigma_k}{2\Sigma_k+rs_{k+1}}\tilde{t}_k-(F(x_{k+\frac{1}{2}})+\tilde{g}_{k+1})}^{2\alpha_i}\\
		=&\ \sum_{i=1}^N\frac{NL_i^2s_{k+1}^{2\alpha_i}}{2}\norm{\frac{2\Sigma_k}{2\Sigma_k+rs_{k+1}}\tilde{t}_k-(F(x_{k+\frac{1}{2}})+\tilde{g}_{k+1})}^{2\alpha_i} \\
		&\ -\frac{1}{2}\norm{\frac{2\Sigma_k}{2\Sigma_k+rs_{k+1}}\tilde{t}_k-(F(x_{k+\frac{1}{2}})+\tilde{g}_{k+1})}^2\\
		&\ +\frac{1}{2}\norm{\frac{2\Sigma_k}{2\Sigma_k+rs_{k+1}}\tilde{t}_k-(F(x_{k+\frac{1}{2}})+\tilde{g}_{k+1})}^2\\
		\leq&\ \sum_{i=1}^N\frac{\sup_{\lambda\geq 0}N^2L_i^2s_{k+1}^{2\alpha_i}\lambda^{\alpha_i}-\lambda}{2N} +\frac{1}{2}\norm{\frac{2\Sigma_k}{2\Sigma_k+rs_{k+1}}\tilde{t}_k-(F(x_{k+\frac{1}{2}})+\tilde{g}_{k+1})}^2\\
		\leq&\ \sum_{i=1}^N\frac{1-\alpha_i}{2N\alpha_i(k+1)^{3\alpha}} +\frac{1}{2}\norm{\frac{2\Sigma_k}{2\Sigma_k+rs_{k+1}}\tilde{t}_k-(F(x_{k+\frac{1}{2}})+\tilde{g}_{k+1})}^{2}.
	\end{align*}
	Inserting these estimates back into the expression for II, we arrive at the upper bound\begin{align*}
		\text{II}\leq&\  \frac{d(r^2-r)s_{k+1}}{2}\inner{\tilde{t}_{k+1},z_k-x_{k+1}}+\frac{d^2s_{k+1}\Sigma_{k+1}}{2}\norm{\tilde{t}_{k+1}}^2\\
		&\ +\frac{d(2\Sigma_k+rs_{k+1})^2}{4}\norm{\frac{2\Sigma_k}{2\Sigma_k+rs_{k+1}}\tilde{t}_k-(F(x_{k+\frac{1}{2}})+\tilde{g}_{k+1})}^2\\
		&\ +\frac{d(2\Sigma_k+rs_{k+1})^2}{4}\left(\norm{\tilde{t}_{k+1}}^2+\norm{F(x_{k+\frac{1}{2}})+\tilde{g}_{k+1}}^2\right)+d\Sigma_k(2\Sigma_k+rs_{k+1})\inner{\tilde{t}_k,F(x_{k+\frac{1}{2}})+\tilde{g}_{k+1}}\\
		&\ -\frac{d\Sigma_k(2\Sigma_k+rs_{k+1})}{s_{k+1}}\inner{\tilde{t}_{k+1}-\tilde{t}_k,x_{k+1}-x_k} +\sum_{i=1}^N\frac{(1-\alpha_i)(2\Sigma_k+rs_{k+1})^2}{4N\alpha_i(k+1)^{3\alpha}}\\
		=&\ \frac{d(r^2-r)s_{k+1}}{2}\inner{\tilde{t}_{k+1},z_k-x_{k+1}}+\left(\frac{d^2s_{k+1}\Sigma_{k+1}}{2}-\frac{d(2\Sigma_k+rs_{k+1})^2}{4}\right)\norm{\tilde{t}_{k+1}}^2\\
		&\ +d\Sigma_k^2\norm{\tilde{t}_k}^2-\frac{d\Sigma_k(2\Sigma_k+rs_{k+1})}{s_{k+1}}\inner{\tilde{t}_{k+1}-\tilde{t}_k,x_{k+1}-x_k}+\sum_{i=1}^N\frac{(1-\alpha_i)(2\Sigma_k+rs_{k+1})^2}{4N\alpha_i(k+1)^{3\alpha}}.
	\end{align*}
	
	Step 4: estimate the upper bound of $\E(k+1)-\E(k)$. We turn to part III. Using the identity\[
	\frac{1}{2}\norm{z_{k+1}-x^*}^2-\frac{1}{2}\norm{z_k-x^*}^2=\inner{z_{k+1}-z_k,z_k-x^*}+\frac{1}{2}\norm{z_{k+1}-z_k}^2
	\] 
	and the recursive rule of $z_{k+1}$, we have \[
	\text{III}=-\frac{d(r^2-r)s_{k+1}}{2}\inner{\tilde{t}_{k+1},z_k-x^*}+\frac{d^2(r-1)s_{k+1}^2}{8}\norm{\tilde{t}_{k+1}}^2.
	\]
	Given by the previous calculation about II and III, we have\begin{align*}
		&\ \E(k+1)-\E(k)\\
		\leq & -\left(\frac{d(2\Sigma_k+rs_{k+1})^2}{4}-\frac{d^2s_{k+1}\Sigma_{k+1}}{2}-d\Sigma_{k+1}^2-\frac{d^2(r-1)s_{k+1}^2}{8}\right)\norm{\tilde{t}_{k+1}}^2 \\
		&\ -\frac{d(r^2-r)s_{k+1}}{2}\inner{\tilde{t}_{k+1},x_{k+1}-x^*}-\frac{d\Sigma_k(2\Sigma_k+rs_{k+1})}{s_{k+1}}\inner{\tilde{t}_{k+1}-\tilde{t}_k,x_{k+1}-x_k}\\
		&\ +\sum_{i=1}^N\frac{(1-\alpha_i)(2\Sigma_k+rs_{k+1})^2}{4N\alpha_i(k+1)^{3\alpha}}.
	\end{align*}
	For $k\geq 1$, the monotonicity of $T$ together with $\tilde{t}_{k+1}\in T(x_{k+1})$, $\tilde{t}_k\in T(x_k)$, and $x^*\in\mathrm{SOL}$ implies\begin{align*}
		-\frac{d(r^2-r)s_{k+1}}{2}\inner{\tilde{t}_{k+1},x_{k+1}-x^*}&\leq 0,\quad\forall k\geq 0, \\
		-\frac{d\Sigma_k(2\Sigma_k+rs_{k+1})}{s_{k+1}}\inner{\tilde{t}_{k+1}-\tilde{t}_k,x_{k+1}-x_k} &\leq 0, \quad\forall k\geq 1.
	\end{align*}
	Moreover, since $\Sigma_0=0$, the second inequality also holds for $k=0$. Consequently, both non-positive inner-product terms can be dropped, yielding
	\begin{align*}
		&\ \E(k+1)-\E(k)\\
		\leq&\  -\left(\frac{d(2\Sigma_k+rs_{k+1})^2}{4}-\frac{d^2s_{k+1}\Sigma_{k+1}}{2}-d\Sigma_{k+1}^2-\frac{d^2(r-1)s_{k+1}^2}{8}\right)\norm{\tilde{t}_{k+1}}^2\\
		&\ +\sum_{i=1}^N\frac{(1-\alpha_i)(2\Sigma_k+rs_{k+1})^2}{4N\alpha_i(k+1)^{3\alpha}}.
	\end{align*}
	Using $\Sigma_{k+1}=\Sigma_k+\frac{s_{k+1}}{2}$, we have\begin{align*}
		&\  \frac{d(2\Sigma_k+rs_{k+1})^2}{4}-\frac{d^2s_{k+1}\Sigma_{k+1}}{2}-d\Sigma_{k+1}^2-\frac{d^2(r-1)s_{k+1}^2}{8} \\
		=&\ \frac{d(2\Sigma_{k+1}+(r-1)s_{k+1})^2}{4}-\frac{d^2s_{k+1}\Sigma_{k+1}}{2}-d\Sigma_{k+1}^2-\frac{d^2(r-1)s_{k+1}^2}{8}\\
		=&\ d\left(\Sigma_{k+1}s_{k+1}+\frac{(r-1)s_{k+1}^2}{4}\right)\left(r-1-\frac{d}{2}\right).
	\end{align*}
	Since the coefficient is nonnegative (as $d<2(r-1)$ and the other factors are positive), we obtain the desired recursive bound
	\begin{align*}
		&\ \E(k+1)-\E(k)\\
		\leq &\ -d\left(\Sigma_{k+1}s_{k+1}+\frac{(r-1)s_{k+1}^2}{4}\right)\left(r-1-\frac{d}{2}\right)\norm{\tilde{t}_{k+1}}^2+\sum_{i=1}^N\frac{(1-\alpha_i)(2\Sigma_k+rs_{k+1})^2}{4N\alpha_i(k+1)^{3\alpha}}\\
		\leq&\ \sum_{i=1}^N\frac{(1-\alpha_i)(2\Sigma_k+rs_{k+1})^2}{4N\alpha_i(k+1)^{3\alpha}}.
	\end{align*}
\end{proof}

\begin{theorem}[Convergence Rate]
	Let $\{x_k\}$, $\{x_{k+\frac{1}{2}}\}$, $\{z_k\}$, $\{s_{k+1}\}$ and $\{\Sigma_k\}$ be the sequences generated by Algorithm \ref{al:usceg}. If $F=\sum_{i=1}^NF_i$, where $F_i$ satisfies \eqref{eq:holder} with $\alpha_i\in\left(\frac{1}{3},1\right]$, $G$ is maximally monotone, $T$ is monotone, $r>1$, $d\in (0,2(r-1))$,  $\alpha=\min_{1\leq i\leq N}\alpha_i$, and $0<s\leq\min_{1\leq i\leq N}\left(\frac{1}{(N^2L_i^2\alpha_i)^{\frac{1}{2\alpha_i}}}\right)$, then we have the following last-iterative convergence rate:\[
	\mathrm{dist}(0,T(x_K))^2\leq O\left(\left(\frac{1}{(3\alpha-1)^2s^2}+\sum_{i=1}^N\frac{(1-\alpha_i)\ln(K)}{N\alpha_i}\right)K^{-(3\alpha-1)}\right).
	\]
\end{theorem}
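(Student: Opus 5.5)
The plan is to combine Lemma \ref{lem:lyapunovform} and Lemma \ref{lem:nonincreasing} by telescoping, in the same way as in the proof for Algorithm \ref{al:ucfeg}.

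\textbf{Lower bound on $\E(K)$.} By Lemma \ref{lem:lyapunovform}, since $r>1$ and $d\in(0,2(r-1))$, each of the three terms in the decomposition of $\E(K)$ is nonnegative. The term $dr\Sigma_K\inner{\tilde{t}_K,x_K-x^*}$ is nonnegative by monotonicity of $T$, because $\tilde{t}_K\in T(x_K)$ and $0\in T(x^*)$. Dropping the last two terms gives
\[
d\Bigl(1-\tfrac{d}{2(r-1)}\Bigr)\Sigma_K^2\,\mathrm{dist}(0,T(x_K))^2\leq d\Bigl(1-\tfrac{d}{2(r-1)}\Bigr)\Sigma_K^2\norm{\tilde{t}_K}^2\leq \E(K).
\]
The constant on the left is strictly positive because $d<2(r-1)$.

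\textbf{Upper bound on $\E(K)$.} Telescope Lemma \ref{lem:nonincreasing} from $k=0$ to $K-1$. Since $\Sigma_0=0$ and $\tilde{t}_0=0$, we have $\E(0)=\frac{r^3-r^2}{2}\norm{x_0-x^*}^2$ (recall $z_0=x_0$). Therefore
\[
\E(K)\leq \frac{r^3-r^2}{2}\norm{x_0-x^*}^2+\sum_{k=0}^{K-1}\sum_{i=1}^N\frac{(1-\alpha_i)(2\Sigma_k+rs_{k+1})^2}{4N\alpha_i(k+1)^{3\alpha}}.
\]

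\textbf{Asymptotics.} Write $\gamma=\frac{3(1-\alpha)}{2}$. Since $\alpha>\frac13$, we have $\gamma\in[0,1)$. Then $\Sigma_K=\frac{s}{2}\sum_{j=1}^K j^{-\gamma}$, which is of order $\frac{s}{2(1-\gamma)}K^{1-\gamma}$; note $1-\gamma=\frac{3\alpha-1}{2}$. Hence $\Sigma_K^{2}$ is of order $\frac{(3\alpha-1)^2s^2}{4}K^{3\alpha-1}$, and it is this $(3\alpha-1)^2s^2$ factor that produces the constant $\frac{1}{(3\alpha-1)^2s^2}$ when we divide. Next, $2\Sigma_k+rs_{k+1}\leq \max(r,2)\,\Sigma_{k+1}\cdot O(1)$, so it is of order $(k+1)^{1-\gamma}$. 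The summand is then of order $(k+1)^{2-2\gamma-3\alpha}=(k+1)^{-1}$, and the double sum is $O\bigl(\sum_i\frac{(1-\alpha_i)}{N\alpha_i}\Sigma\text{-scale}\cdot\ln K\bigr)$. Dividing by $\Sigma_K^2$ yields
\[
O\Bigl(\bigl(\tfrac{1}{(3\alpha-1)^2s^2}+\sum_{i=1}^N\tfrac{(1-\alpha_i)\ln K}{N\alpha_i}\bigr)K^{-(3\alpha-1)}\Bigr).
\]
The constants $r$, $d$ and $\norm{x_0-x^*}$ are absorbed into the $O$.

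\textbf{Main obstacle.} The only delicate point is the asymptotic bookkeeping: keeping track of the $s$- and $(3\alpha-1)$-dependence in $\Sigma_k$ so that the $\ln K$ term carries no stray $s^2(3\alpha-1)^2$ factor. This needs explicit two-sided integral comparisons $\sum_{j\le k}j^{-\gamma}\asymp \frac{k^{1-\gamma}}{1-\gamma}$, using $\gamma<1$, which is where $\alpha_i>\frac13$ is used.
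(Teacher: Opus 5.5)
Your proposal follows the same route as the paper. It obtains the lower bound $d\bigl(1-\frac{d}{2(r-1)}\bigr)\Sigma_K^2\norm{\tilde{t}_K}^2\le\E(K)$ from Lemma~\ref{lem:lyapunovform}, telescopes Lemma~\ref{lem:nonincreasing} starting from $\E(0)=\frac{r^3-r^2}{2}\norm{x_0-x^*}^2$, and then uses the growth rate of $\Sigma_k$. You are slightly more careful than the paper, which drops the positive constant $d\bigl(1-\frac{d}{2(r-1)}\bigr)$ when passing from $\E(K)$ to $\norm{\tilde{t}_K}^2$.

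There is one arithmetic slip in your asymptotics, and the paper's proof makes the same one.
\begin{itemize}
\item \textbf{The slip.} Since $1-\gamma=\frac{3\alpha-1}{2}$, your own expression gives $\frac{s}{2(1-\gamma)}K^{1-\gamma}=\frac{s}{3\alpha-1}K^{\frac{3\alpha-1}{2}}$. So $\Sigma_K^2$ is of order $\frac{s^2}{(3\alpha-1)^2}K^{3\alpha-1}$, not $\frac{(3\alpha-1)^2s^2}{4}K^{3\alpha-1}$. Dividing $\E(0)$ by it produces $\frac{(3\alpha-1)^2}{s^2}K^{-(3\alpha-1)}$, so your explanation of where $\frac{1}{(3\alpha-1)^2s^2}$ comes from is wrong.
\item \textbf{Why the stated bound survives.} The uniform estimate $\Sigma_K=\frac{s}{2}\sum_{j=1}^Kj^{-\gamma}\ge\frac{s}{2}K^{1-\gamma}$ gives $\Sigma_K^{-2}\E(0)\le\frac{4\E(0)}{s^2}K^{-(3\alpha-1)}$. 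Since $0<3\alpha-1\le 2$, we have $\frac{4}{s^2}\le\frac{16}{(3\alpha-1)^2s^2}$, so the theorem's weaker constant holds.
\item \textbf{The $\ln K$ term.} Replace the vague ``$\max(r,2)\Sigma_{k+1}\cdot O(1)$'' by $2\Sigma_k+rs_{k+1}=2\Sigma_{k+1}+(r-1)s_{k+1}\le 2r\Sigma_{k+1}$, which holds because $s_{k+1}\le 2\Sigma_{k+1}$. The common scale $\frac{s^2}{(3\alpha-1)^2}$ then cancels against $\Sigma_K^2$.
\item \textbf{The cancellation is only asymptotic.} Uniformly in $\alpha$ one only has $\Sigma_{k+1}^2/\Sigma_K^2\le\frac{4}{(3\alpha-1)^2}\bigl(\frac{k+1}{K}\bigr)^{3\alpha-1}$. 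So the clean coefficient $\sum_i\frac{(1-\alpha_i)\ln K}{N\alpha_i}$ requires $K$ large relative to $\alpha$, which is exactly the two-sided comparison issue you flagged.
\end{itemize}
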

\begin{proof}
	Let $\beta=\frac{3(1-\alpha)}{2}$. Because $\alpha_i\in\left(\frac{1}{3},1\right]$, we have $\beta\in[0,1)$ and $\Sigma_k$ is the same order as $k^{1-\beta}$.
	Next, according to Lemma \ref{lem:nonincreasing}, we have
	\begin{align*}
		&\ \E(K)\\
		\leq&\ \E(0)+\sum_{k=0}^{K-1}\sum_{i=1}^N\frac{(1-\alpha_i)(2\Sigma_k+rs_{k+1})^2}{4N\alpha_i(k+1)^{3\alpha}}\\
		=&\ \E(0)+\sum_{k=0}^{K-1}\sum_{i=1}^N\frac{(1-\alpha_i)(2\Sigma_{k+1}+(r-1)s_{k+1})^2}{4N\alpha_i(k+1)^{3\alpha}}\\
		\leq&\ \E(0)+\sum_{k=0}^{K-1}\sum_{i=1}^N\frac{(1-\alpha_i)(2+(r-1))^2\Sigma_{k+1}^2}{4N\alpha_i(k+1)^{3\alpha}}.
		\end{align*}
 Since $-\frac{3(1-\alpha)}{2}\in (-1,0)$, $\Sigma_{k+1}$ is the same order as $\frac{3\alpha-1}{2}k^{1-\frac{3(1-\alpha)}{2}}$ and \[
		2-3(1-\alpha)-3\alpha=-1,
		\]
		then we have\[
		\sum_{k=0}^{K-1}\sum_{i=1}^N\frac{(1-\alpha_i)\Sigma_{k+1}^2}{4N\alpha_i(k+1)^{3\alpha}}\sim\sum_{i=1}^N\frac{(1-\alpha_i)(3\alpha-1)^2s^2\ln(K)}{4N\alpha_i}.
		\]
		Due to Lemma \ref{lem:lyapunovform}, we have that\begin{align*}
		&\ \mathrm{dist}(0,T(x_K))^2\leq\norm{\tilde{t}_K}^2\\
		\leq&\  \Sigma_K^{-2}\E(0)+\Sigma_K^{-2}\sum_{k=0}^{K-1}\sum_{i=1}^N\frac{(1-\alpha_i)(2+(r-1))^2\Sigma_{k+1}^2}{4N\alpha_i(k+1)^{3\alpha}}\\
		\leq&\ O\left(\left(\frac{1}{(3\alpha-1)^2s^2}+\sum_{i=1}^N\frac{(1-\alpha_i)\ln(K)}{N\alpha_i}\right)K^{-(3\alpha-1)}\right).
		\end{align*}
	
\end{proof}
\section{Numerical Experiments}

\subsection{$l_p$-Norm Minimization over the $l_2$ Ball}

In our first experiment, we consider the following problem:

\begin{equation}
		\min_x \frac{1}{p}\norm{Ax-b}^p_p,\quad	\text{s.t.} \norm{x}_2 \leq 1,
	\label{eq:test1}
\end{equation}
where $p\in(1,2]$, $A\in\mathbb{R}^{m\times n}$, and $b\in\mathbb{R}^m$. Problem \eqref{eq:test1} is a special case of the problem studied in \cite{ito2023}. The gradient of $\frac{1}{p}\norm{Ax-b}^p_p$ is Hölder continuous with Hölder exponent $\alpha = p-1$ and Hölder constant $L = 2^{2-p} m^{\frac{(p-1)(2-p)}{2p}} \norm{A}_2^p$. In our experiments, $m=100$, $n=1000$, the matrix $A$ is randomly generated, and $b$ is generated such that $b = A x^*$ with a randomly generated vector $x^*$ such that $\norm{x^*} \leq 1$. We test four algorithms and compare their numerical performance:
\begin{itemize}
	\item The EG method with $s_{k+1}$ given by \eqref{eq:stepsizeT}, $d=1/\sqrt{2}$, $s=\frac{1}{(L^2\alpha)^{\frac{1}{2\alpha}}}$;
	\item Tseng's splitting method with $d=1/\sqrt{2}$, $s=\frac{1}{(L^2\alpha)^{\frac{1}{2\alpha}}}$;
	\item Algorithm \ref{al:ucfeg} with $s = \frac{1}{(L^2 \alpha)^{\frac{1}{2\alpha}}}$;
	\item Algorithm \ref{al:usceg} with $s = \frac{1}{(L^2 \alpha)^{\frac{1}{2\alpha}}}$, $r=20$, and $d = (r-1)/2$.
\end{itemize}
The numerical results are presented in Figure \ref{fig:1}.

\begin{figure}[!h]
	\centering
	\subfloat{\includegraphics[width=.33\linewidth]{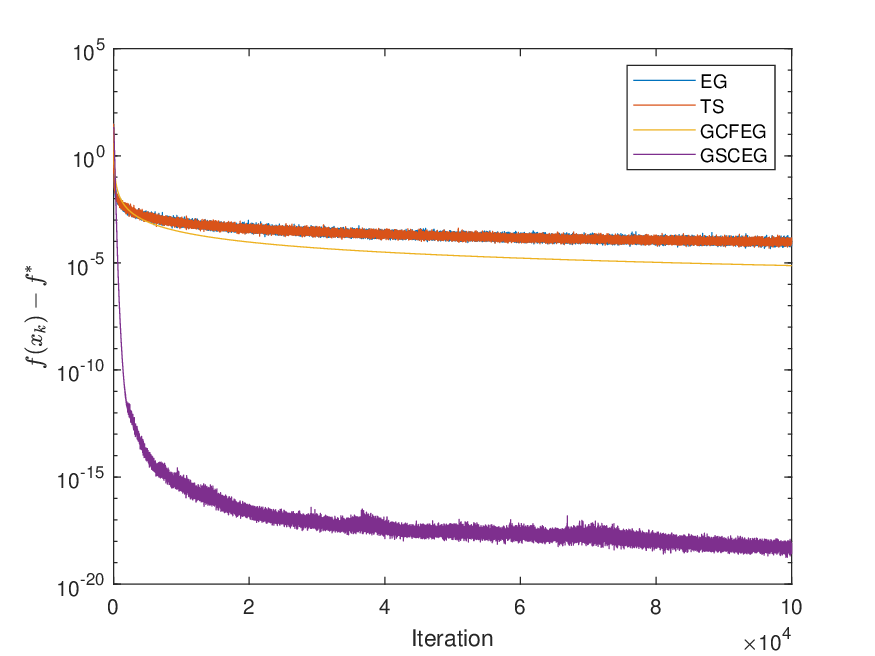}}
	\subfloat{\includegraphics[width=.33\linewidth]{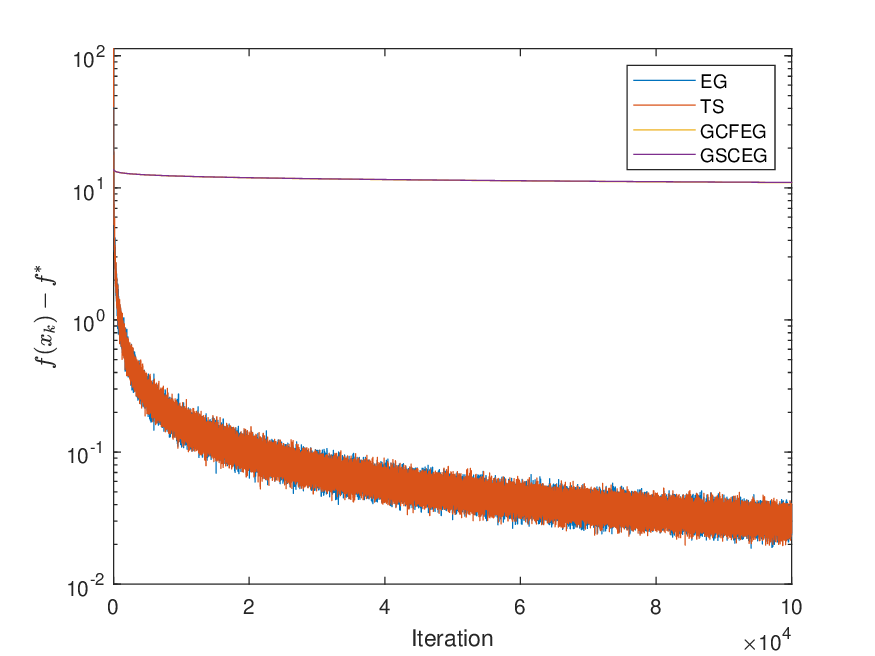}}
	\subfloat{\includegraphics[width=.33\linewidth]{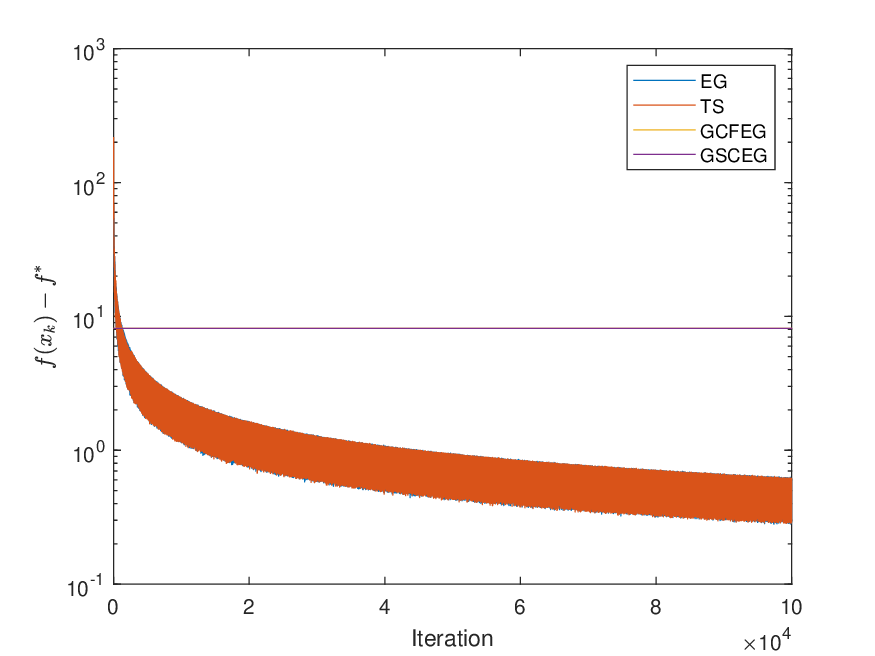}}
	\caption{Numerical results on random instances of problem \eqref{eq:test1} with $m=100$, $n=1000$.}
	\label{fig:1}
\end{figure}

As seen in Figure \ref{fig:1}, when $p=1.8$, Algorithm \ref{al:ucfeg} and Algorithm \ref{al:usceg} converge faster than the EG method and Tseng's splitting method. Moreover, Algorithm \ref{al:usceg} converges the fastest. When $p=1.5$, Algorithm \ref{al:ucfeg} and Algorithm \ref{al:usceg} converge more slowly than the EG method and Tseng's splitting method. When $p=1.2$, Algorithm \ref{al:ucfeg} and Algorithm \ref{al:usceg} do not converge, while the EG method and Tseng's splitting method still converge. It is worth noting, however, that none of the algorithms considered in this paper utilize that $F$ is the gradient of a convex function. Consequently, their numerical performance on problem \eqref{eq:test1} is not as good as that of the parameter-dependent conditional gradient method in \cite{ito2023} (see Figure \ref{fig:1.1}).

\begin{figure}[!h]
	\centering
	\includegraphics[width=.6\linewidth]{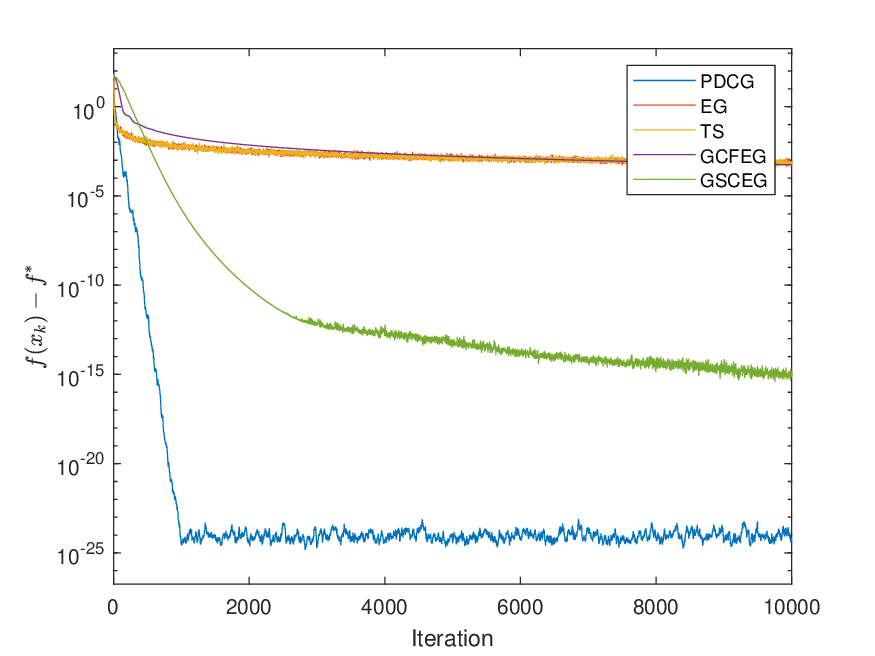}
	\caption{Comparison of the parameter-dependent conditional gradient method with the EG method and its variants.}
	\label{fig:1.1}
\end{figure}

\subsection{Zeros of H\"{o}lder Continuous Operator}
\label{sec:4.2}

In our second numerical experiment, we consider the following nonlinear equation:
\begin{equation}
	0 = F(x), \label{eq:testF}
\end{equation}
where
\begin{equation}
	F(x) = 
	\begin{cases}
		\|x\|^{\alpha-1} (I_{2n} + B)x, & x \neq 0; \\
		0, & x = 0,
	\end{cases}
	\label{eq:F}
\end{equation}
with \(\alpha \in (0,1)\), \(I_{2n} \in \mathbb{R}^{2n \times 2n}\) the \(2n\)-dimensional identity matrix, and \(B \in \mathbb{R}^{2n \times 2n}\) a skew‑symmetric matrix. The operator \(F\) given by \eqref{eq:F} is not the gradient of a function. Moreover, it can be shown that if \(\|B\|_2 \leq \frac{2\sqrt{\alpha}}{1-\alpha}\), then \(F\) is monotone and Hölder continuous with Hölder exponent \(\alpha\) and Hölder constant \(L = 2^{1-\alpha} \sqrt{1 + \|B\|_2^2}\) (see Appendix \ref{sec:A}).

In our experiment, \(n = 1000\) and the matrix \(B\) is chosen as
\[
B = \beta \begin{pmatrix}
	0 & I_n \\
	-I_n & 0
\end{pmatrix},
\]
where \(\beta>0\) is a scalar (its specific value should be provided). It is easy to verify that $\norm{B}_2=\beta$ and \(\mathrm{zero}(F) = \{0\}\). In our test, $\beta$ is set to be $\frac{2\sqrt{\alpha}}{1-\alpha}$, $n=1000$. We test three algorithms and compare their numerical performance under \(\alpha=0.8\), \(\alpha = 0.5\), and \(\alpha = 0.2\), respectively:
\begin{itemize}
	\item The EG method with \(s_{k+1}\) given by \eqref{eq:stepsizeT}, \(d = 1/\sqrt{2}\), and \(s = \frac{1}{(L^2 \alpha)^{\frac{1}{2\alpha}}}\);
	\item Algorithm \ref{al:ucfeg} with \(s = \frac{1}{(L^2 \alpha)^{\frac{1}{2\alpha}}}\);
	\item Algorithm \ref{al:usceg} with \(s = \frac{1}{(L^2 \alpha)^{\frac{1}{2\alpha}}}\), \(r = 20\), and \(d = (r-1)/2\).
\end{itemize}
The Tseng’s splitting method is not included in this test because the EG method coincides with the Tseng’s splitting method when \(G = 0\) (see \cite{tseng00}).

\begin{figure}[!h]
	\centering
	\subfloat{\includegraphics[width=.33\linewidth]{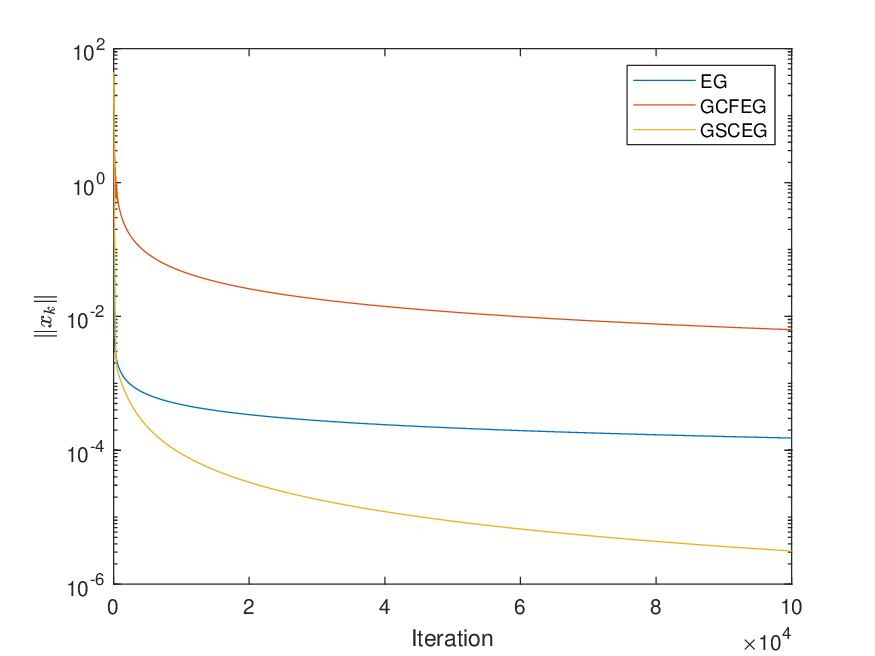}}
	\subfloat{\includegraphics[width=.33\linewidth]{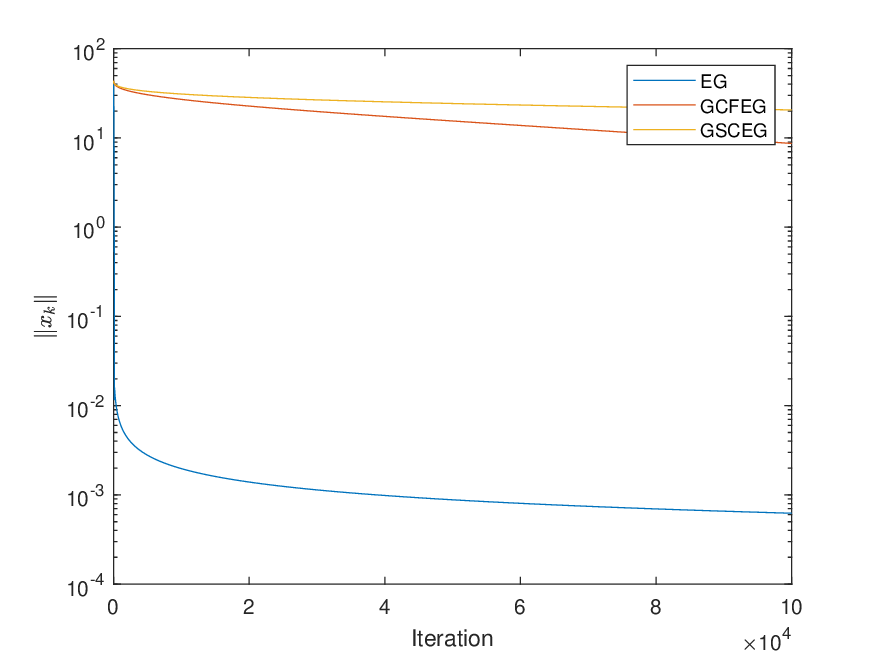}}
	\subfloat{\includegraphics[width=.33\linewidth]{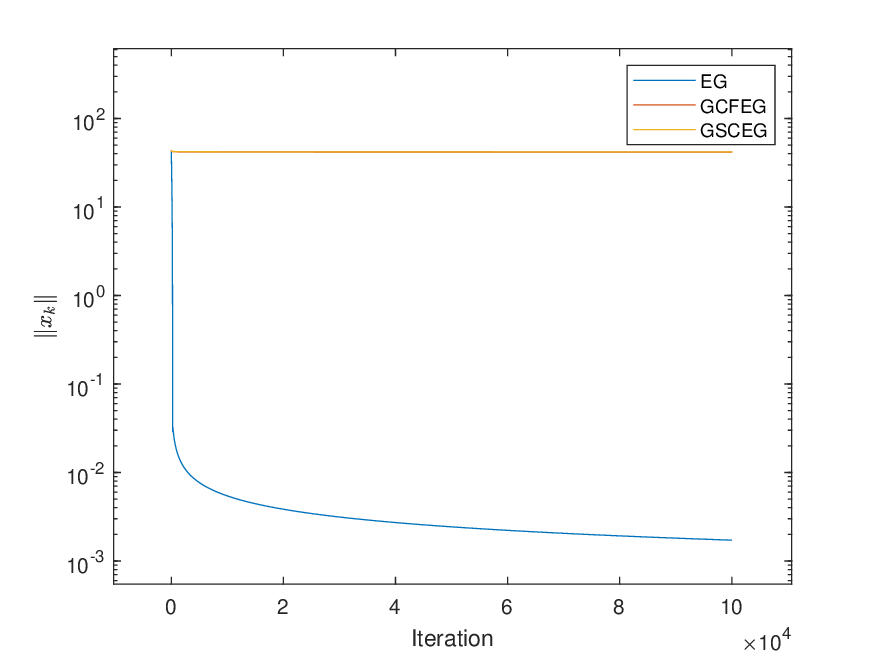}}
	\caption{Numerical results on random instances of problem \eqref{eq:testF} with $n=1000$.}
	\label{fig:2}
\end{figure}

Similar to the previous experiment, when \(\alpha = 0.8\), Algorithm \ref{al:usceg} converges the fastest; when \(\alpha = 0.5\), Algorithm \ref{al:ucfeg} and Algorithm \ref{al:usceg} converge more slowly than the EG method; when \(\alpha = 0.2\), Algorithm \ref{al:ucfeg} and Algorithm \ref{al:usceg} do not converge, while the EG method still converges. It is worth mentioning that when \(\alpha = 0.8\), Algorithm \ref{al:ucfeg} does not converge faster than the EG method, even though its theoretical convergence rate is faster than that of the EG method. A possible explanation is that $\tilde{x}_{k+1}$ in Algorithm \ref{al:ucfeg} is obtain by performing a convex combination between the initial starting point $x_0$ and the current feasible point $x_k$; this may slow down its numerical performance.

\subsection{Minimax Problem with $l_p$ Norm Regularization}
Here, we consider the following minimax problem:

\begin{equation}
	\min_{u \in \mathbb{R}^n} \max_{v \in \mathbb{R}^n} L(u,v) := \frac{1}{2} \langle u, Hu \rangle - \langle h, u \rangle - \langle v, Au - b \rangle + \rho \|u\|_p^p - \rho \|v\|_p^p,
	\label{eq:test2}
\end{equation}
where
\begin{align*}
	A &:= \frac{1}{4} \begin{pmatrix}
		& & & -1 & 1 \\
		& & \iddots & \iddots & \\
		& -1 & 1 & & \\
		-1 & 1 & & & \\
		1 & & & &
	\end{pmatrix} \in \mathbb{R}^{n \times n}, \quad
	H := 2A^\top A, \\
	b &:= \frac{1}{4} \begin{pmatrix} 1 \\ 1 \\ \vdots \\ 1 \\ 1 \end{pmatrix} \in \mathbb{R}^n, \quad
	h := \frac{1}{4} \begin{pmatrix} 0 \\ 0 \\ \vdots \\ 0 \\ 1 \end{pmatrix} \in \mathbb{R}^n.
\end{align*}
When \(\rho = 0\), problem \eqref{eq:test2} reduces to the minimax problem studied in \cite{bot2023fast, ouyang2021, yoon21}. For \(\rho > 0\), problem \eqref{eq:test2} can be reformulated as
\begin{align*}
	F\binom{u}{v} &= F_1\binom{u}{v} + F_2\binom{u}{v}, \quad G = 0, \\
	F_1\binom{u}{v} &= \binom{Hu - h - A^\top v}{Au - b}, \quad
	F_2\binom{u}{v} = \rho \binom{\nabla_u \|u\|_p^p}{\nabla_v \|v\|_p^p},
\end{align*}
where \(F_1\) is 1-Lipschitz continuous, and \(F_2\) is Hölder continuous with Hölder exponent \(\alpha = p-1\) and Hölder constant \(L = 2^{2-p} (2n)^{\frac{(p-1)(2-p)}{2p}}\). In our experiments, we set \(n = 1000\) and \(\rho = 10\). For this problem, the EG method and the generalized Tseng's splitting method share the same iteration formula; therefore, we test the following algorithms:
\begin{itemize}
	\item The EG method with $d=1/\sqrt{2}$ and $s=\min\left\{1, \frac{1}{2L\sqrt{\alpha}}\right\}$;
	\item Algorithm \ref{al:ucfeg} with \(s = \min\left\{\frac{1}{2^2}, \frac{1}{(2^2 L^2 \alpha)^{\frac{1}{2\alpha}}}\right\}\);
	\item Algorithm \ref{al:usceg} with \(s = \min\left\{\frac{1}{2^2}, \frac{1}{(2^2 L^2 \alpha)^{\frac{1}{2\alpha}}}\right\}\), \(r = 20\), and \(d = (r-1)/2\).
\end{itemize}
The numerical results are displayed in Figure \ref{fig:3}.

\begin{figure}[!h]
	\centering
	\subfloat{\includegraphics[width=.33\linewidth]{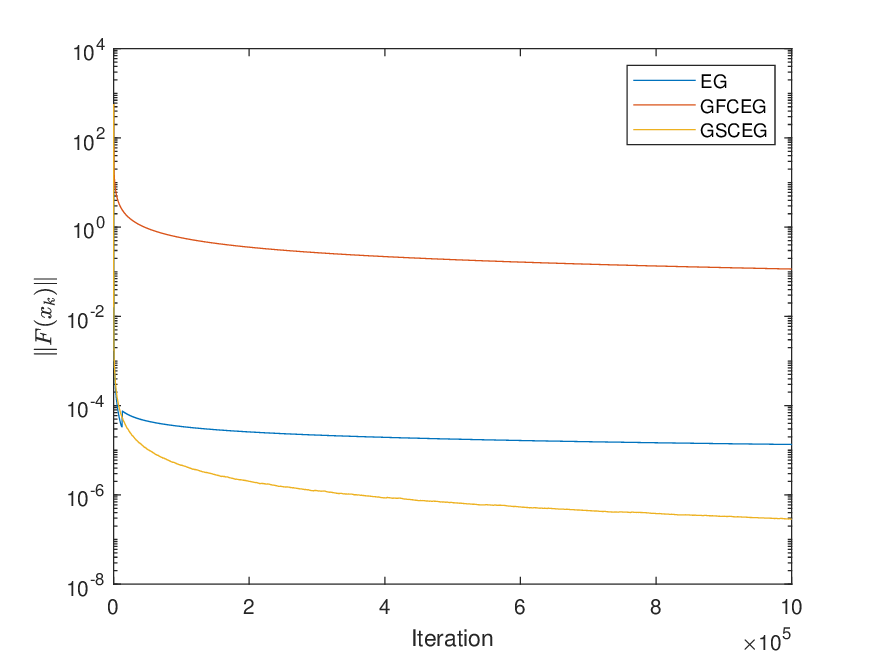}}
	\subfloat{\includegraphics[width=.33\linewidth]{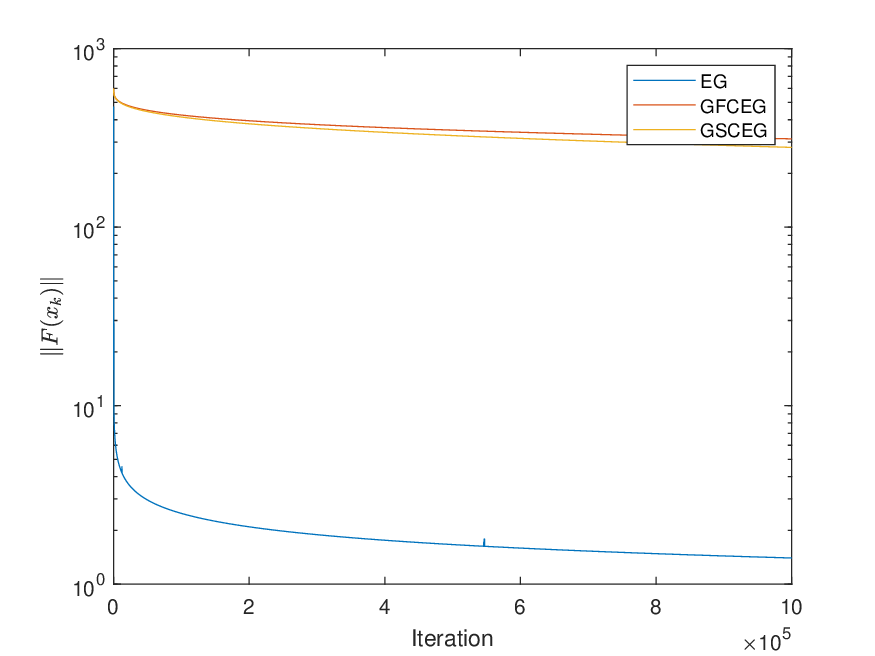}}
	\subfloat{\includegraphics[width=.33\linewidth]{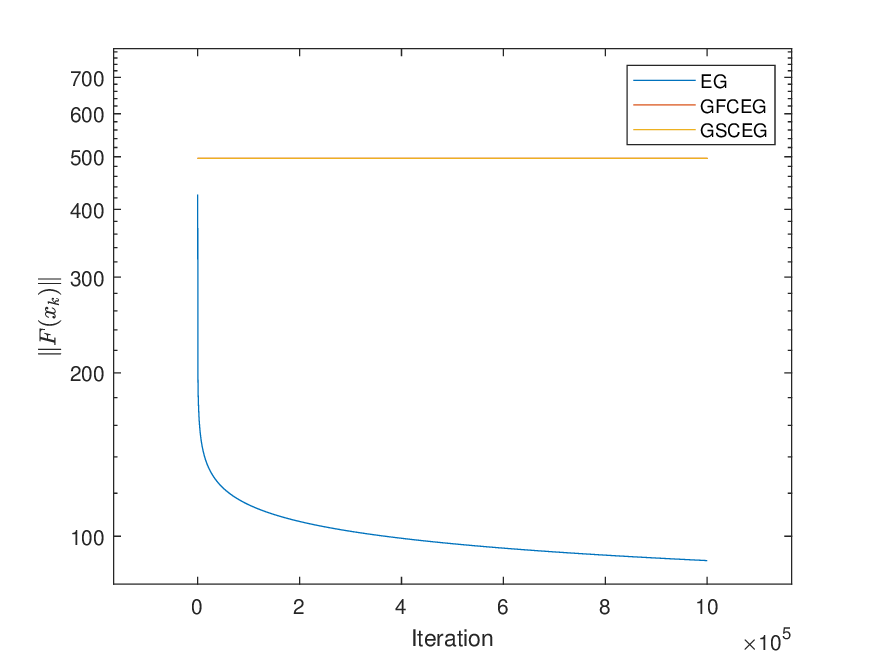}}
	\caption{Numerical results for problem \eqref{eq:test2} with $n=1000$.}
	\label{fig:3}
\end{figure}

As can be seen from Figure \ref{fig:3}, the numerical results are similar to those reported in Section \ref{sec:4.2}.

\section{Conclusion}

In this paper, we consider the monotone inclusion problem \eqref{eq:main} and study the convergence rates of the Tseng's splitting method and its accelerated variants, the com-EAG method and the SCEG method, under the assumption that \(F = \sum_{i=1}^N F_i\), where each \(F_i\) is H\"{o}lder continuous with exponent \(\alpha_i \in (0,1]\) and constant \(L_i\), \(G\) is a maximally monotone operator, and \(T\) is a monotone operator.  
First, we prove that the convergence rate of Tseng's splitting method satisfies  
\[
\min_{0 \leq k \leq K} \operatorname{dist}\bigl(0, T(y_{k+1})\bigr)^2 \leq O\!\left( \left( \frac{(2+2d^2)\alpha}{1-d^2} + \sum_{i=1}^N \frac{(1-\alpha_i)\alpha \ln(K)}{N(1-d^2)\alpha_i} \right) K^{-\alpha} \right).
\]  
We further prove that the convergence rates of Algorithm \ref{al:ucfeg} and Algorithm \ref{al:usceg} satisfy  
\[
\operatorname{dist}\bigl(0, T(x_K)\bigr)^2 \leq O\!\left( \left( \frac{1}{(3\alpha-1)^2 s^2} + \sum_{i=1}^N \frac{(1-\alpha_i)\ln(K)}{N\alpha_i} \right) K^{-(3\alpha-1)} \right).
\]  
When \(\alpha > \frac{1}{2}\), the com‑EAG and SCEG methods converge faster than the Tseng's splitting method; when \(\alpha < \frac{1}{2}\), Tseng's method converges faster than the two accelerated methods. Our numerical experiments also support this theoretical result.

\appendix
\section{Properties of $F$ in Section \ref{sec:4.2}}
\label{sec:A}
We consider the operator $F: \mathbb{R}^n \to \mathbb{R}^n$ as:
 \[F(x) = \begin{cases} \|x\|^{\alpha-1}(I + B)x, & \text{if } x \neq 0; \\ 0, & \text{if } x = 0, \end{cases} \]
where $\alpha\in(0,1)$ and $B \in \mathbb{R}^{n \times n}$ is a skew-symmetric matrix.

\begin{proposition}
	If $\|B\|_2 \leq \frac{2\sqrt{\alpha}}{1-\alpha}$, $F$ is H\"{o}lder continuous with  H\"{o}lder constant $L = 2^{1-\alpha}\sqrt{1 + \|B\|_2^2}$.
\end{proposition}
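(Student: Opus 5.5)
The proof factors $F$ as a linear map composed with a scalar radial map, reduces the Hölder estimate to that radial map, and then checks a one-parameter inequality at its two extreme cases. Write $F=(I+B)\circ\varphi$, where $\varphi(x)=\|x\|^{\alpha-1}x$ for $x\neq0$ and $\varphi(0)=0$. Since $B$ is skew-symmetric, $\langle v,Bv\rangle=0$ for every $v$, so
\[
\|(I+B)v\|^2=\|v\|^2+\|Bv\|^2\leq(1+\|B\|_2^2)\|v\|^2 .
\]
Taking $v=\varphi(x)-\varphi(y)$, it therefore suffices to prove
\[
\|\varphi(x)-\varphi(y)\|\leq 2^{1-\alpha}\|x-y\|^{\alpha}\quad\text{for all } x,y .
\]
The hypothesis $\|B\|_2\leq\frac{2\sqrt{\alpha}}{1-\alpha}$ is not used for Hölder continuity; it is only needed for monotonicity.

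For the reduction, first dispose of the trivial cases. If $y=0$, then $\|\varphi(x)\|=\|x\|^\alpha\leq 2^{1-\alpha}\|x\|^\alpha$, and the case $x=0$ is symmetric. Otherwise assume without loss of generality $a:=\|x\|\geq b:=\|y\|>0$, and set $t=b/a\in(0,1]$ and $c=\langle x,y\rangle/(ab)\in[-1,1]$. Expanding both squared norms and dividing by $a^{2\alpha}$ turns the target into
\[
\Phi(c):=1+t^{2\alpha}-2t^{\alpha}c-4^{1-\alpha}\bigl(1+t^2-2tc\bigr)^{\alpha}\leq 0,\qquad c\in[-1,1].
\]
The first three terms of $\Phi$ are affine in $c$. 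The function $c\mapsto(1+t^2-2tc)^\alpha$ is a concave power of a nonnegative affine function, hence concave. So $\Phi$ is convex in $c$ and attains its maximum on $[-1,1]$ at an endpoint. It remains to verify $\Phi(\pm1)\leq0$.

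At $c=1$ the inequality reads $(1-t^\alpha)^2\leq 4^{1-\alpha}(1-t)^{2\alpha}$. This follows from $1-t^\alpha\leq(1-t)^\alpha$, which is the subadditivity of $u\mapsto u^\alpha$ applied as $1=t+(1-t)$, together with $4^{1-\alpha}\geq1$.

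At $c=-1$ the inequality reads $(1+t^\alpha)^2\leq 4^{1-\alpha}(1+t)^{2\alpha}$, which is equivalent to $\frac{1+t^\alpha}{2}\leq\left(\frac{1+t}{2}\right)^\alpha$. This is Jensen's inequality for the concave function $u\mapsto u^\alpha$ applied to the points $1$ and $t$.

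The step needing the most care is reducing the vector inequality to the scalar function $\Phi$ and spotting the convexity in $c$. If that fails, the fallback is the standard split $\varphi(x)-\varphi(y)=\|x\|^{\alpha-1}(x-y)+(\|x\|^{\alpha-1}-\|y\|^{\alpha-1})y$, with case analysis on whether $\|x-y\|$ exceeds $\|x\|$; that route gives a worse constant, however.
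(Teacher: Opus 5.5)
Your proposal is correct and follows essentially the same route as the paper. The paper also factors out $I+B$, reduces the estimate for $x\mapsto\|x\|^{\alpha-1}x$ to a function of the angle that is affine minus concave (hence convex), and checks the two endpoints by subadditivity and Jensen. The only differences are cosmetic: you bound $\|I+B\|_2$ via $\langle v,Bv\rangle=0$ rather than through the spectrum of $I-B^2$, you normalize by $\|x\|^{2\alpha}$, and you handle the zero cases explicitly, which the paper's $\cos\theta$ parametrization silently skips.
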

\begin{proof}
	We seek to show that there exists a constant $L > 0$ such that for all $x, y \in \mathbb{R}^n$,	$ \|F(x) - F(y)\| \leq L\|x - y\|^\alpha.$ Let $A = I + B$. Since $A$ is a linear operator, we can factor it out:
	$$ \|F(x) - F(y)\| = \left\| A \left( x\|x\|^{\alpha-1} - y\|y\|^{\alpha-1} \right) \right\| \leq \|A\|_2 \left\| x\|x\|^{\alpha-1} - y\|y\|^{\alpha-1} \right\| $$
	First, we determine the spectral norm of $A$, given by $\|A\|_2 = \sqrt{\lambda_{\max}(A^T A)}$. Because $B$ is skew-symmetric, we have:
	$$ A^T A = (I + B)^T(I + B) = (I - B)(I + B) = I - B^2 $$
	The eigenvalues of a real skew-symmetric matrix are purely imaginary or zero, meaning the eigenvalues of $B^2$ are real and non-positive. Consequently, the maximum eigenvalue of $I - B^2$ is exactly $1 + \|B\|_2^2$, yielding:
	$$ \|A\|_2 = \sqrt{1 + \|B\|_2^2} $$
	
	Next, we prove that \[
	\left\| x\|x\|^{\alpha-1} - y\|y\|^{\alpha-1} \right\|\leq 2^{1-\alpha}\norm{x-y}^\alpha.
	\]
	Let \(r=\|x\|,\; s=\|y\|\) and let $\theta\in[0,\pi]$ such that \(\cos\theta=\frac{x\cdot y}{rs}\). 
	\[
	\|x\|x\|^{\alpha-1}-y\|y\|^{\alpha-1}\|^2 = r^{2\alpha}+s^{2\alpha}-2r^\alpha s^\alpha\cos\theta = (r^\alpha-s^\alpha)^2+2r^\alpha s^\alpha(1-\cos\theta),
	\]  
	\[
	\|x-y\|^2 = r^2+s^2-2rs\cos\theta = (r-s)^2+2rs(1-\cos\theta).
	\]  
	Let \(t=1-\cos\theta\in[0,2]\), the desired inequality can be transformed into
	\[
	(r^\alpha-s^\alpha)^2+2r^\alpha s^\alpha t \;\le\; 4^{1-\alpha}\bigl[(r-s)^2+2rs t\bigr]^\alpha.
	\]  
	The left-hand side of the above inequality is denoted as \(L(t)\), the right-hand side of the above inequality is denoted as \(R(t)\). Because \(\alpha\in(0,1)\), the function \(t\mapsto R(t)\) is concave, and \(L(t)\) is linear, thus \(H(t)=L(t)-R(t)\) is a convex function. Because the maximum of a convex function attains at the boundary of domain. So we discuss $H(0)$ and $H(2)$. 
\begin{itemize}
	\item \(t=0\): Then we need to prove
	\((r^\alpha-s^\alpha)^2 \le 4^{1-\alpha}(r-s)^{2\alpha}\), i.e. \(|r^\alpha-s^\alpha|\le 2^{1-\alpha}|r-s|^\alpha\).
	Because of the inequality \(|r^\alpha-s^\alpha|\le|r-s|^\alpha\) and  \(2^{1-\alpha}\ge1\), \(|r^\alpha-s^\alpha|\le 2^{1-\alpha}|r-s|^\alpha\) holds.
	\item \(t=2\): Then we need to prove \((r^\alpha+s^\alpha)^2 \le 4^{1-\alpha}(r+s)^{2\alpha}\), i.e.  \(r^\alpha+s^\alpha \le 2^{1-\alpha}(r+s)^\alpha\).	Because \(u\mapsto u^\alpha\) is concave, then using the Jensen's inequality, we have
	\[
	\frac{r^\alpha+s^\alpha}{2}\le \left(\frac{r+s}{2}\right)^\alpha\quad\Longrightarrow\quad r^\alpha+s^\alpha\le 2^{1-\alpha}(r+s)^\alpha.
	\]  
\end{itemize}
	In conclusion, \[
	\left\| x\|x\|^{\alpha-1} - y\|y\|^{\alpha-1} \right\|\leq 2^{1-\alpha}\norm{x-y}^\alpha.
	\]
	
	Based on the previous discussion\[
	\|F(x) - F(y)\| \leq 2^{1-\alpha}\sqrt{1+\norm{B}^2}\norm{x-y}^\alpha.
	\]
	 \end{proof}

\begin{proposition}
	If $\|B\|_2 \leq \frac{2\sqrt{\alpha}}{1-\alpha}$, the operator $F$ is monotone.
\end{proposition}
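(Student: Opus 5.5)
The plan is to compute $\inner{F(x)-F(y),x-y}$ directly, split it into a symmetric part and a part coming from $B$, and reduce nonnegativity to a one-variable inequality that can be checked by power series. If $x=0$ or $y=0$ the claim is immediate from $\inner{Bx,x}=0$, so assume $x,y\neq 0$. As in the previous proposition, let $r=\norm{x}$, $s=\norm{y}$ and $\cos\theta=\frac{\inner{x,y}}{rs}$. Put $a=r^{\alpha-1}$ and $b=s^{\alpha-1}$, and write $\beta:=\norm{B}_2$.

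\textbf{Step 1: split the inner product.} Since $B$ is skew-symmetric, $\inner{Bx,x}=\inner{By,y}=0$ and $\inner{By,x}=-\inner{Bx,y}$. Hence
\[
\inner{F(x)-F(y),x-y}=\underbrace{ar^2+bs^2-(a+b)rs\cos\theta}_{=:P-Q\cos\theta}+(b-a)\inner{Bx,y}.
\]
Because $Bx\perp x$, only the component of $y$ orthogonal to $x$ contributes to $\inner{Bx,y}$. That component has norm $s\sin\theta$, so $|\inner{Bx,y}|\le \beta rs\sin\theta$. With $R:=\beta|a-b|rs$, it therefore suffices to show
\[
P-Q\cos\theta-R\sin\theta\ge 0\quad\text{for all }\theta.
\]
Since $Q\cos\theta+R\sin\theta\le\sqrt{Q^2+R^2}$ and $P\ge0$, this reduces to $P^2-Q^2\ge R^2$.

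\textbf{Step 2: normalize and factor.} The inequality is homogeneous of degree $\alpha+1$, so we may take $s=1$. Then
\[
P=r^{\alpha+1}+1,\qquad Q=r^\alpha+r,\qquad R=\beta|r^\alpha-r|.
\]
Factoring gives
\[
P^2-Q^2=(r^\alpha-1)(r-1)(r^\alpha+1)(r+1)=(r^{2\alpha}-1)(r^2-1).
\]
So the target is
\[
(r^{2\alpha}-1)(r^2-1)\ge\beta^2(r^\alpha-r)^2,\qquad r>0 .
\]

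\textbf{Step 3: prove the one-variable inequality.} I expect this to be the main obstacle, because the inequality is tight to second order at $r=1$ precisely when $\beta=\frac{2\sqrt\alpha}{1-\alpha}$. Substitute $r=e^{2u}$ and cancel the common factor $4e^{2(1+\alpha)u}$. The inequality becomes
\[
\sinh(2\alpha u)\sinh(2u)\ge\beta^2\sinh^2((1-\alpha)u).
\]
Using product-to-sum formulas, this is
\[
\cosh(2(1+\alpha)u)-\cosh(2(1-\alpha)u)\ge\beta^2\bigl(\cosh(2(1-\alpha)u)-1\bigr).
\]
I would now compare Taylor coefficients of $u^{2n}/(2n)!$ term by term. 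After dividing by $2^{2n}$, the required coefficient inequality is
\[
(1+\alpha)^{2n}-(1-\alpha)^{2n}\ge\beta^2(1-\alpha)^{2n}.
\]

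\textbf{Step 4: check the coefficients.} For $n=1$ the left side is $4\alpha$, and the right side is at most $\frac{4\alpha}{(1-\alpha)^2}(1-\alpha)^2=4\alpha$. This is exactly where the hypothesis $\beta\le\frac{2\sqrt\alpha}{1-\alpha}$ enters. For $n\ge2$, the binomial expansion of the left side contains only odd powers of $\alpha$ with positive coefficients, so it is at least $2\cdot 2n\alpha\ge4\alpha$. The right side is at most $4\alpha(1-\alpha)^{2n-2}\le4\alpha$. Hence every coefficient inequality holds, the series inequality follows, and so $\inner{F(x)-F(y),x-y}\ge0$ for all $x,y$.
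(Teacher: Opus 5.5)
Your proof is correct, and it takes a genuinely different route from the paper. The paper argues infinitesimally. It computes the Jacobian of $F$ on $\mathbb{R}^n\setminus\{0\}$ and writes the quadratic form of its symmetric part along $v=\cos\theta\,\hat{x}+\sin\theta\,\hat{w}$ as $\alpha X^2+Y^2-(1-\alpha)(\hat{w}^TB\hat{x})XY$. It then gets the threshold from the discriminant condition together with $|\hat{w}^TB\hat{x}|\le\norm{B}_2$. Monotonicity follows from the asserted fact that a continuous map which is $C^1$ away from the origin and has positive semidefinite symmetrized Jacobian is monotone.

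You instead work with two points directly. You bound the skew term by $\beta rs\sin\theta$, maximize over $\theta$, and reduce everything to the scalar inequality $(r^{2\alpha}-1)(r^2-1)\ge\beta^2(r^\alpha-r)^2$. Each step checks out: the splitting via skew-symmetry, the factorization of $P^2-Q^2$, the $\sinh$ substitution, and the termwise coefficient comparison.

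What your route buys:
\begin{itemize}
\item It needs no differentiability, and no separate treatment of segments through the singular point $0$, where the Jacobian blows up like $\norm{x}^{\alpha-1}$. The paper covers this only by noting that $F$ is continuous.
\item It shows why the local condition suffices globally. Your $n=1$ inequality $\beta^2(1-\alpha)^2\le4\alpha$ is exactly the paper's discriminant condition, since the second-order behaviour at $r=1$ is the infinitesimal limit $y\to x$. All higher coefficients $n\ge2$ have slack.
\end{itemize}

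The paper's route is shorter and makes sharpness of the constant more transparent. Take $\hat{x}$ with $\norm{B\hat{x}}=\norm{B}_2$ and $\hat{w}=B\hat{x}/\norm{B\hat{x}}$, which is orthogonal to $\hat{x}$ by skew-symmetry. This shows the symmetrized Jacobian becomes indefinite once $\norm{B}_2$ exceeds the bound. The price is its reliance on the Jacobian-to-monotonicity step.
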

\begin{proof}
	To prove that $F$ is monotone, we must show that $\langle F(x) - F(y), x - y \rangle \geq 0$ for all $x, y \in \mathbb{R}^n$. Because $F$ is continuous on $\mathbb{R}^n$ and continuously differentiable on $\mathbb{R}^n \setminus \{0\}$, it is sufficient to prove that the symmetric part of its Jacobian matrix, $J_{sym}(x) = \frac{1}{2}(J(x) + J(x)^T)$, is positive semi-definite for all $x \neq 0$.
	
	For $x \neq 0$, the Jacobian matrix $J(x)$ is computed as:
	$$ J(x) = \|x\|^{\alpha-1}(I + B) + (\alpha-1)\|x\|^{\alpha-3}(I + B)xx^T $$
		Taking the transpose and using the property $B^T = -B$, we find:
	$$ J(x)^T = \|x\|^{\alpha-1}(I - B) + (\alpha-1)\|x\|^{\alpha-3}xx^T(I - B) $$
	Averaging $J(x)$ and $J(x)^T$ yields the symmetric part:
	$$ J_{sym}(x) = \|x\|^{\alpha-1}I + (\alpha-1)\|x\|^{\alpha-3}xx^T + \frac{\alpha-1}{2}\|x\|^{\alpha-3} \left( Bxx^T - xx^TB \right) $$
	
	We must demonstrate that for any non-zero vector $v \in \mathbb{R}^n$, the quadratic form $v^T J_{sym}(x) v \geq 0$. Because \[
	 v^T (Bxx^T - xx^TB) v = (v^TBx)(x^Tv) - (v^Tx)(x^TBv),
	\]	
	and $x^TBv = (B^Tx)^T v = -(Bx)^Tv = -v^TBx$, we have $v^T (Bxx^T - xx^TB) v=2(v^Tx)(v^TBx)$. Then we have	
	$$ v^T J_{sym}(x) v = \|x\|^{\alpha-1} \left[ \|v\|^2 - (1-\alpha)\frac{(v^Tx)^2}{\|x\|^2} - (1-\alpha)\frac{(v^Tx)(v^TBx)}{\|x\|^2} \right] $$
	
	Let $\hat{x} = \frac{x}{\|x\|}$ be the unit vector in the direction of $x$. We can decompose $v$ into a component parallel to $\hat{x}$ and a component orthogonal to it. Without loss of generality, assume $\|v\| = 1$ and write $v = \cos\theta \hat{x} + \sin\theta \hat{w}$, where $\hat{w}$ is a unit vector orthogonal to $\hat{x}$ ($\hat{w}^T\hat{x} = 0$).
	
	First, we evaluate the two inner product term.  $v^T\hat{x} = \cos\theta$; $v^TB\hat{x} = (\cos\theta \hat{x}^T + \sin\theta \hat{w}^T)B\hat{x} = \sin\theta (\hat{w}^TB\hat{x})$, exploiting the fact that $\hat{x}^TB\hat{x} = 0$. Substituting these into the bracketed term of the quadratic form yields:
	$$ \Gamma := 1 - (1-\alpha)\cos^2\theta - (1-\alpha)\cos\theta\sin\theta (\hat{w}^TB\hat{x}) $$
	Using the identity $1 = \sin^2\theta + \cos^2\theta$, this becomes:
	$$ \Gamma = \sin^2\theta + \alpha\cos^2\theta - (1-\alpha)(\hat{w}^TB\hat{x})\cos\theta\sin\theta $$
	
	For $J_{sym}(x)$ to be positive semi-definite, we require $\Gamma \geq 0$ for all $\theta$ and all allowable $\hat{w}$. We can view $\Gamma$ as a quadratic form in terms of variables $X = \cos\theta$ and $Y = \sin\theta$:
	$$ Q(X, Y) = \alpha X^2 + Y^2 - (1-\alpha)(\hat{w}^TB\hat{x})XY $$
	This quadratic form is non-negative everywhere if and only if its discriminant with respect to the polynomial in $X/Y$ is less than or equal to zero:
	$$ \left( (1-\alpha)(\hat{w}^TB\hat{x}) \right)^2 - 4\alpha\leq 0, $$
	which leads to
	$$ |\hat{w}^TB\hat{x}| \leq \frac{2\sqrt{\alpha}}{1-\alpha}. $$
	
	By the definition of the spectral norm and Cauchy-Schwarz inequality, $|\hat{w}^TB\hat{x}| \leq \|\hat{w}\| \|B\hat{x}\| \leq \|B\|_2$.
	Therefore, if we enforce the bound $\|B\|_2 \leq \frac{2\sqrt{\alpha}}{1-\alpha}$, the discriminant condition is universally satisfied. Consequently, $J_{sym}(x)$ is positive semi-definite for all $x \neq 0$, proving that $F$ is monotone on $\mathbb{R}^n$. 
\end{proof}

\section*{Acknowledgments}
The author thank the anonymous reviewers for their valuable suggestions. The author also wants to thank Prof. Xiaojun Chen for providing guidance to this paper. This work is supported in part by funds from RGC grant JLFS/P-501/24 for the CAS AMSS-PolyU Joint Laboratory in Applied Mathematics and Hong Kong Research Grant Council project PolyU15300024. The author uses DeepSeek to improve readability and to assist in writing the code.


\bibliographystyle{abbrvnat}
\bibliography{reference}

@article{dang2015,
  title={On the convergence properties of non-euclidean extragradient methods for variational inequalities with generalized monotone operators},
  author={Dang, Cong D and Lan, Guanghui},
  journal={Computational Optimization and applications},
  volume={60},
  number={2},
  pages={277--310},
  year={2015},
  publisher={Springer},
  doi={10.1007/s10589-014-9673-9}
}

@article{Nemirovski2004,
author = {Nemirovski, Arkadi},
title = {Prox-Method with Rate of Convergence O(1/t) for Variational Inequalities with Lipschitz Continuous Monotone Operators and Smooth Convex-Concave Saddle Point Problems},
journal = {SIAM Journal on Optimization},
volume = {15},
number = {1},
pages = {229-251},
year = {2004},
doi = {10.1137/S1052623403425629}
}

@article{stonyakin2022,
  title={Generalized mirror prox algorithm for monotone variational inequalities: Universality and inexact oracle},
  author={Stonyakin, Fedor and Gasnikov, Alexander and Dvurechensky, Pavel and Titov, Alexander and Alkousa, Mohammad},
  journal={Journal of Optimization Theory and Applications},
  volume={194},
  number={3},
  pages={988--1013},
  year={2022},
  publisher={Springer},
  doi={10.1007/s10957-022-02062-7}
}

@misc{chakraborty2025,
      title={Popov Mirror-Prox Method for Variational Inequalities}, 
      author={Abhishek Chakraborty and Angelia Nedić},
      year={2025},
      eprint={2507.23395},
      archivePrefix={arXiv},
      primaryClass={math.OC},
      url={https://arxiv.org/abs/2507.23395}, 
}

@article{korpelevich76,
	title={The extragradient method for finding saddle points and other problems},
	author={Korpelevich, Galina M},
	journal={Matecon},
	volume={12},
	pages={747--756},
	year={1976}
}

@article{antipin1976,
	title={On a method for convex programs using a symmetrical modification of the {L}agrange function},
	author={Anatoly S. Antipin},
	year={1976},
	journal={Economika i Matem. Metody},
	volumn={7},
	pages={1164-1173},
	number={6}
}

@article{popov80,
	title={A modification of the {A}rrow-{H}urwitz method of search for saddle points},
	author={Leonid D. Povov},
	journal={Mat. Zametki},
	volume={28},
	number={5},
	pages={777--784},
	year={1980}
}

@InProceedings{daskalakis2018a,
	author = {Daskalakis, Constantinos and Panageas, Ioannis},
	title = {The limit points of (optimistic) gradient descent in min-max optimization},
	year = {2018},
	booktitle = {Proceedings of the 32nd International Conference on Neural Information Processing Systems},
	pages = {9256–9266},
	numpages = {11},
	location = {Montr\'{e}al, Canada},
	series = {NIPS'18},
	url={https://dl.acm.org/doi/pdf/10.5555/3327546.3327597}
}

@InProceedings{daskalakis2018b,
	title={Training {GAN}s with Optimism},
	author={Constantinos Daskalakis and Andrew Ilyas and Vasilis Syrgkanis and Haoyang Zeng},
	booktitle={International Conference on Learning Representations},
	year={2018},
	url={https://openreview.net/forum?id=SJJySbbAZ}
}

@article{tseng00,
	title={A modified forward-backward splitting method for maximal monotone mappings},
	author={Paul Tseng},
	journal={SIAM Journal on Control and Optimization},
	volume={38},
	number={2},
	pages={431--446},
	year={2000},
	publisher={SIAM},
	doi={10.1137/S0363012998338806}
}

@article{cai2021,
  title={Strong convergence theorems for solving variational inequality problems with pseudo-monotone and non-Lipschitz operators},
  author={Cai, Gang and Dong, Qiao-Li and Peng, Yu},
  journal={Journal of Optimization Theory and Applications},
  volume={188},
  number={2},
  pages={447--472},
  year={2021},
  publisher={Springer},
  doi={10.1007/s10957-020-01792-w}
}

@inproceedings{diakonikolas21,
	title={Efficient methods for structured nonconvex-nonconcave min-max optimization},
	author={Jelena Diakonikolas and Constantinos Daskalakis  and Michael I. Jordan},
	booktitle={International Conference on Artificial Intelligence and Statistics},
	pages={2746--2754},
	year={2021},
	organization={PMLR}
}

@inproceedings{fan23,
	title={Weaker {MVI} Condition: Extragradient Methods with Multi-Step Exploration},
	author={Yifeng Fan and Yongqiang Li  and  Bo Chen },
	booktitle={The Twelfth International Conference on Learning Representations},
	year={2023}
}

@inproceedings{pethick22,
	title={Escaping limit cycles: Global convergence for constrained nonconvex-nonconcave minimax problems},
	author={Thomas Pethick and Puya Latafat and Panos Patrinos and Olivier Fercoq and Volkan Cevher},
	booktitle={International Conference on Learning Representations},
	year={2022},
	url={https://openreview.net/forum?id=2\_vhkAMARk}
}

@inproceedings{gorbunov22,
	title={Stochastic extragradient: General analysis and improved rates},
	author={Eduard Gorbunov and  Hugo Berard and Gauthier Gidel and Nicolas Loizou},
	booktitle={International Conference on Artificial Intelligence and Statistics},
	pages={7865--7901},
	year={2022},
	organization={PMLR}
}

@article{iusem17,
	title={Extragradient method with variance reduction for stochastic variational inequalities},
	author={Alfredo N. Iusem and Alekandro Jofr\'{e} and Roberto  I. Oliveira and Philip Thompson},
	journal={SIAM Journal on Optimization},
	volume={27},
	number={2},
	pages={686--724},
	year={2017},
	publisher={SIAM},
	doi={10.1137/15M1031953}
}

@article{kannan19,
	title={Optimal stochastic extragradient schemes for pseudomonotone stochastic variational inequality problems and their variants},
	author={Kannan, Aswin and Shanbhag, Uday V},
	journal={Computational Optimization and Applications},
	volume={74},
	number={3},
	pages={779--820},
	year={2019},
	publisher={Springer},
	doi={10.1007/s10589-019-00120-x}
}

@inproceedings{mishchenko20,
	title={Revisiting stochastic extragradient},
	author={Konstantin Mishchenko and Dmitry Kovalev and  Egor Shulgin and Peter Richt{\'a}rik and Yura Malitsky },
	booktitle={International Conference on Artificial Intelligence and Statistics},
	pages={4573--4582},
	year={2020},
	organization={PMLR}
}

@misc{tran23,
	author = {Quoc Tran-Dinh},
	title = {Sublinear Convergence Rates of Extragradient-Type Methods: A Survey on Classical and Recent Developments},
	year = {2023},
	url={https://arxiv.org/abs/2303.17192}, 
}

@article{halpern67,
	title={Fixed points of nonexpanding maps},
	author={Benjamin Halpern},
	journal={Bulletin of the American Mathematical Society},
	year={1967},
	volume={73},
	pages={957-961},
	url={https://api.semanticscholar.org/CorpusID:120539954}
}

@article{lieder2021,
  title={On the convergence rate of the {H}alpern-iteration},
  author={Lieder, Felix},
  journal={Optimization letters},
  volume={15},
  number={2},
  pages={405--418},
  year={2021},
  publisher={Springer},
  doi = {10.1007/s11590-020-01617-9},
}

@article{qi21,
	title={Convergence of {H}alpern’s iteration method with applications in optimization},
	author={Qi, Huiqiang and Xu, Hong-Kun},
	journal={Numerical Functional Analysis and Optimization},
	volume={42},
	number={15},
	pages={1839--1854},
	year={2021},
	publisher={Taylor \& Francis},
	doi={https://doi.org/10.1080/01630563.2021.2001826}
}

@article{sun2025a,
author = {Sun, Defeng and Yuan, Yancheng and Zhang, Guojun and Zhao, Xinyuan},
title = {Accelerating Preconditioned {ADMM} via Degenerate Proximal Point Mappings},
journal = {SIAM Journal on Optimization},
volume = {35},
number = {2},
pages = {1165-1193},
year = {2025},
doi = {10.1137/24M1650053}
}

@article{chen2025,
  title={{HPR-LP}: An implementation of an {HPR} method for solving linear programming},
  author={Chen, Kaihuang and Sun, Defeng and Yuan, Yancheng and Zhang, Guojun and Zhao, Xinyuan},
  journal={Mathematical Programming Computation},
  pages={1--28},
  year={2025},
  publisher={Springer},
  doi = {10.1007/s12532-025-00292-0},
}

@misc{lu2024,
      title={Restarted {H}alpern {PDHG} for Linear Programming}, 
      author={Haihao Lu and Jinwen Yang},
      year={2024},
      eprint={2407.16144},
      archivePrefix={arXiv},
      primaryClass={math.OC},
      url={https://arxiv.org/abs/2407.16144}
}

@inproceedings{yoon21,
	title={Accelerated Algorithms for Smooth Convex-Concave Minimax Problems with ${O}(1/k^2)$ Rate on Squared Gradient Norm},
	author={TaeHo Yoon and Ernest K. Ryu},
	booktitle={International Conference on Machine Learning},
	pages={12098--12109},
	year={2021},
	organization={PMLR}
}

@inproceedings{lee21,
	title={Fast Extra Gradient Methods for Smooth Structured Nonconvex-Nonconcave Minimax Problems},
	author={Sucheol Lee and Donghwan Kim},
	booktitle={Advances in Neural Information Processing Systems},
	editor={A. Beygelzimer and Y. Dauphin and P. Liang and J. Wortman Vaughan},
	year={2021}
}

@InProceedings{yang2024,
	title = 	 {Accelerated Algorithms for Constrained Nonconvex-Nonconcave Min-Max Optimization and Comonotone Inclusion},
	author =       {Cai, Yang and Oikonomou, Argyris and Zheng, Weiqiang},
	booktitle = 	 {Proceedings of the 41st International Conference on Machine Learning},
	pages = 	 {5312--5347},
	year = 	 {2024},
	editor = 	 {Salakhutdinov, Ruslan and Kolter, Zico and Heller, Katherine and Weller, Adrian and Oliver, Nuria and Scarlett, Jonathan and Berkenkamp, Felix},
	volume = 	 {235},
	series = 	 {Proceedings of Machine Learning Research},
	month = 	 {21--27 Jul},
	url = 	 {https://proceedings.mlr.press/v235/cai24f.html},
}

@article{yuan2026,
  title={Symplectic discretization approach for developing new proximal point algorithm},
  author={Yuan, Ya-xiang and Zhang, Yi},
  journal={Computational Optimization and Applications},
  volume={93},
  number={2},
  pages={651--687},
  year={2026},
  publisher={Springer},
  doi = {10.1007/s10589-025-00728-2},
}

@misc{yuan2025,
      title={Symplectic Extra-gradient Type Method for Solving General Non-monotone Inclusion Problem}, 
      author={Ya-xiang Yuan and Yi Zhang},
      year={2025},
      eprint={2406.10793},
      archivePrefix={arXiv},
      primaryClass={math.OC},
      url={https://arxiv.org/abs/2406.10793}, 
}

@misc{trandinh2025,
      title={Accelerated Extragradient-Type Methods -- Part 2: Generalization and Sublinear Convergence Rates under Co-Hypomonotonicity}, 
      author={Quoc Tran-Dinh and Nghia Nguyen-Trung},
      year={2025},
      eprint={2501.04585},
      archivePrefix={arXiv},
      primaryClass={math.OC},
      url={https://arxiv.org/abs/2501.04585}, 
}

@article{tran2024,
	title={Extragradient-type methods with $\mathcal{O}(1/k)$ last-iterate convergence rates for co-hypomonotone inclusions},
	author={Quoc Tran-Dinh},
	journal={Journal of Global Optimization},
	volume={89},
	number={1},
	pages={197--221},
	year={2024},
	publisher={Springer},
	doi={10.1007/s10898-023-01347-z}
}

@article{bot2023fast,
	title={Fast Optimistic Gradient Descent Ascent ({OGDA}) method in continuous and discrete time},
	author={Bo{\c{t}}, Radu Ioan and Csetnek, Ern{\"o} Robert and Nguyen, Dang-Khoa},
	journal={Foundations of Computational Mathematics},
	pages={1--60},
	year={2023},
	publisher={Springer},
	doi={https://doi.org/10.1007/s10208-023-09636-5}
}

@inproceedings{bot2023project,
	author = {Sedlmayer, Michael and Nguyen, Dang-Khoa and Bo\c{t}, Radu Ioan},
	title = {A fast optimistic method for monotone variational inequalities},
	year = {2023},
	booktitle = {Proceedings of the 40th International Conference on Machine Learning},
	articleno = {1261},
	numpages = {33},
	location = {Honolulu, Hawaii, USA},
	series = {ICML'23}
}

@article{qu2026extra,
  title={An extra gradient {A}nderson-accelerated algorithm for pseudomonotone variational inequalities},
  author={Qu, Xin and Bian, Wei and Chen, Xiaojun},
  journal={Mathematics of Computation},
  volume={95},
  number={359},
  pages={1361--1387},
  year={2026},
  doi = {10.1090/mcom/4095},
}

@article{anderson1965,
  title={Iterative procedures for nonlinear integral equations},
  author={Anderson, Donald G},
  journal={Journal of the ACM (JACM)},
  volume={12},
  number={4},
  pages={547--560},
  year={1965},
  publisher={ACM New York, NY, USA},
  doi = {10.1145/321296.321305},
}

@article{ito2023,
  author  = {Masaru Ito and Zhaosong Lu and Chuan He},
  title   = {A Parameter-Free Conditional Gradient Method for Composite Minimization under {H}\"{o}lder Condition},
  journal = {Journal of Machine Learning Research},
  year    = {2023},
  volume  = {24},
  number  = {166},
  pages   = {1--34},
  url     = {http://jmlr.org/papers/v24/22-0983.html}
}

@article{ouyang2021,
  title={Lower complexity bounds of first-order methods for convex-concave bilinear saddle-point problems},
  author={Ouyang, Yuyuan and Xu, Yangyang},
  journal={Mathematical Programming},
  volume={185},
  number={1},
  pages={1--35},
  year={2021},
  publisher={Springer},
  doi={10.1007/s10107-019-01420-0}
}

@article{TOYASAKI2014340,
  title={A variational inequality formulation of equilibrium models for end-of-life products with nonlinear constraints},
  author={Toyasaki, Fuminori and Daniele, Patrizia and Wakolbinger, Tina},
  journal={European Journal of Operational Research},
  volume={236},
  number={1},
  pages={340--350},
  year={2014},
  publisher={Elsevier},
  doi = {10.1016/j.ejor.2013.12.006}
}

@book{kinderlehrer2000,
  title={An introduction to variational inequalities and their applications},
  author={Kinderlehrer, David and Stampacchia, Guido},
  year={2000},
  publisher={SIAM}
}

@article{UI2016139,
title = {Bayesian {N}ash equilibrium and variational inequalities},
journal = {Journal of Mathematical Economics},
volume = {63},
pages = {139-146},
year = {2016},
issn = {0304-4068},
doi = {10.1016/j.jmateco.2016.02.004},
author = {Takashi Ui}
}

@article{taylor84,
	title={An interpretation for min-max structural design problems including a method for relaxing constraints},
	author={J. E. Taylor and Martin P. Bends{\o}e },
	journal={International Journal of Solids and Structures},
	volume={20},
	number={4},
	pages={301--314},
	year={1984},
	publisher={Elsevier},
	doi={10.1016/0020-7683(84)90041-6}
}

@book{bental09,
	title={Robust optimization},
	author={Aharon Ben-Tal and Laurent El Ghaoui and Arkadi Nemirovski},
	volume={28},
	year={2009},
	publisher={Princeton university press},
	address={Princeton}
}

@article{minty62,
	author = {George J. Minty},
	title = {{Monotone (nonlinear) operators in Hilbert space}},
	volume = {29},
	journal = {Duke Mathematical Journal},
	number = {3},
	publisher = {Duke University Press},
	pages = {341 -- 346},
	year = {1962},
	doi = {10.1215/S0012-7094-62-02933-2},
	URL = {https://doi.org/10.1215/S0012-7094-62-02933-2}
}

@article{nesterov83,
	title={A method of solving a convex programming problem with convergence rate $O(1/k^2)$},
	author={Yurii Nesterov},
	journal={Doklady Akademii Nauk SSSR},
	volume={269},
	number={3},
	pages={543},
	year={1983}
}

\end{document}